\documentclass[a4paper,leqno]{article}

\usepackage[T1]{fontenc}
\usepackage{ae,aecompl}
\usepackage{amsmath,amsthm,amssymb}
\usepackage[all]{xy}
\usepackage{enumitem}
\usepackage{aliascnt}
\usepackage{hyperref}
\usepackage{mathtools}

\usepackage{algorithm}
\usepackage[noend]{algorithmic}

\newcommand{\makedate}{\today}

%%%%%% development only %%%%%%
% \usepackage[notref,notcite]{showkeys} % global option 'final' turns labels off
% \usepackage[mathrefs,right]{lineno}
% \setpagewiselinenumbers
% \linenumberdisplaymath
% \pagewiselinenumbers
% \usepackage{color}
% \reversemarginpar
% \newcommand{\marg}[1]{{\marginpar{\color{red}\tiny #1}}}
% \newcommand{\tc}{\marginpar{\color{red}\bf \rule{1em}{1ex} !! \rule{1em}{1ex}}}
% \renewcommand{\makedate}{\jobname.tex [\pdffilemoddate{\jobname.tex}] -- \today}
%%%%%%%%%%%%%%%%%%%%%%%%%%%%%%

%%%%%% environments %%%%%%

%
\newtheorem{theorem}{Theorem}[section]

\newaliascnt{lemma}{theorem}
\newtheorem{lemma}[lemma]{Lemma}
\aliascntresetthe{lemma}

\newaliascnt{proposition}{theorem}
\newtheorem{proposition}[proposition]{Proposition}
\aliascntresetthe{proposition}

\newaliascnt{corollary}{theorem}
\newtheorem{corollary}[corollary]{Corollary}
\aliascntresetthe{corollary}

\newaliascnt{remark}{theorem}
\newtheorem{remark}[remark]{Remark}
\aliascntresetthe{remark}

\newaliascnt{definition}{theorem}

\aliascntresetthe{definition}

\newaliascnt{example}{theorem}
\newtheorem{example}[example]{Example}
\aliascntresetthe{example}

\newaliascnt{conjecture}{theorem}

\aliascntresetthe{conjecture}

\newaliascnt{openproblem}{theorem}

\aliascntresetthe{openproblem}

\newaliascnt{exercise}{theorem}

\aliascntresetthe{exercise}

\newaliascnt{algo}{theorem}

\aliascntresetthe{algo}

\newaliascnt{assumption}{theorem}

\aliascntresetthe{assumption}

\newaliascnt{notation}{theorem}
\newtheorem{notation}[notation]{Notation}
\aliascntresetthe{notation}

\newcounter{namedenum}
  {\end{list}}
\newenvironment{myexample}{\begin{example}\upshape}{\end{example}}
\newenvironment{myremark}{\begin{remark}\upshape}{\end{remark}}
%%%%%%%%%%%%%%%%%%%%%%%%%

%%%%%% local commands %%%%%%
\newcommand{\Magma}{\textsc{Magma}}
\newcommand{\A}{\mathcal{A}}
\newcommand{\NN}{\mathbb{N}}
\newcommand{\ZZ}{\mathbb{Z}}
\DeclareMathOperator{\gr}{\mathrm{gr}}
\DeclareMathOperator{\sgn}{\mathrm{sgn}}
\DeclareMathOperator{\rwedge}{\widetilde{\wedge}}
\DeclareMathOperator{\rvee}{\widetilde{\vee}}
\newcommand{\PP}{\mathcal{P}}

\DeclareMathOperator{\rbigvee}{\widetilde{\bigvee}}
\newcommand{\Sym}{\mathfrak{S}}
\DeclareMathOperator{\Orb}{\mathrm{Orb}}
\newcommand{\T}{\mathfrak{T}}
\newcommand{\Part}{\mathfrak{P}}
\newcommand{\TT}{\widetilde{T}}
%%%%%%%%%%%%%%%%%%%%%%%%%%%%

\parindent=0pt
\parskip=0.55ex

%|<------------------------------------------------------------------------>|
\begin{document}

\title{Computing growth functions of braid monoids and counting vertex-labelled bipartite graphs}
\author{Volker Gebhardt\footnotemark[2]}
\date{\makedate}

\renewcommand{\thefootnote}{\fnsymbol{footnote}}
\footnotetext[2]{The author acknowledges support under Australian Research Council's Discovery Projects funding scheme (project number DP1094072), and the Spanish Project MTM2010-19355.}
\maketitle
\renewcommand{\thefootnote}{\arabic{footnote}}

%|<------------------------------------------------------------------------>|

\begin{abstract}
\noindent
We derive a recurrence relation for the number of simple vertex-labelled bipartite graphs with given degrees of the vertices and use this result to obtain a new method for computing the growth function of the Artin monoid of type $A_{n-1}$ with respect to the simple elements (permutation braids) as generators.
Instead of matrices of size $2^{n-1}\times 2^{n-1}$, we use matrices of size $p(n)\times p(n)$, where $p(n)$ is the number of partitions of $n$.
\end{abstract}

%----------------------------------------------------------------------------

\section{Introduction}\label{S:Intro}%
Let $M$ be a monoid with a finite generating set $S$, let $S^*$ be the free monoid over $S$, and let $\alpha:S^*\to M$ be the natural epimorphism.

For $w\in S^*$ we denote by $|w|$ the length of $w$, and for $g\in M$ we denote by $|g|_S$ the length of a shortest word $w\in S^*$ satisfying $\alpha(w)=g$.
We call $w\in S^*$ a \emph{geodesic representative with respect to $S$} for $g\in M$, if $\alpha(w)=g$ and $|w|=|g|_S$.

The \emph{spherical growth function} of $M$ with respect to $S$ is defined as the formal power series
\[
  \gr(t) = \gr_{M,S}(t) = \sum_{g\in M}t^{|g|_S} = \sum_{\ell=0}^\infty a_\ell t^\ell \;,
\]
where $a_\ell$ is the number of elements of $M$ that have a geodesic representative of length $\ell$ with respect to $S$.  That is, computing the spherical growth function is equivalent to counting the elements of given geodesic length.

It is well known that the spherical growth function is a rational function, if $M$ admits a \emph{geodesic automatic structure} with respect to $S$~\cite{Epstein1996}; the latter in particular is the case, if each element of $M$ admits a unique \emph{normal form} which is a geodesic representative, and if there exists a finite state automaton recognising normal forms.
Geodesic automatic structures of this kind have been described for the braid monoid~\cite{Epstein1992} and, more generally, Artin monoids of finite type~\cite{Charney1993} and (yet more generally) Garside monoids of spherical type~\cite{Dehornoy2002}.

The \emph{transition matrix} of a finite state automaton with states $\{1,\ldots,N\}$ is an $N\times N$ matrix $T\in\ZZ^{N\times N}$ whose entry $T_{i,j}$ is the number of transitions leading from state $j$ to state $i$.
If $T$ is the transition matrix of the finite state automaton recognising normal forms, with state $1$ being the starting state, $u=(1,0,\ldots,0)^t$, and if $v\in\ZZ^{1\times N}$ is the characteristic function of the set of accept states, then the number of accepted strings of length $\ell$, that is the number of elements of geodesic length $\ell$, can be computed as
$a_\ell = v T^\ell u$~\cite{Epstein1996}
\smallskip

For Artin monoids, two finite state automata recognising normal forms have been described in~\cite{Charney1993}; cf.\ \autoref{S:Background}.
The states of the first automaton are in bijection to the \emph{simple elements} of the monoid, which in the case of Artin monoids correspond to the elements of the associated Coxeter group;
this automaton readily generalises to Garside monoids of spherical type.
The states of the second automaton are in bijection to the \emph{finishing sets} of simple elements;
the finishing set of a simple element is the set of atoms of $M$ that can occur last in any reduced word representing the given simple element.

In the important case of the Artin monoid of type $A_{n-1}$, the braid monoid $B_n^+$ on $n$ strands, these automata have $(n-1)!$ respectively $2^{n-1}$ states.
Due to their size, computing the growth function of the braid monoid $B_n^+$ using the methods described in~\cite{Charney1993} becomes infeasible in practice, if $n\gtrsim10$.
\smallskip

Being able to compute the growth function, respectively the number of elements of given geodesic length, is of practical importance, among other things, for the purpose of generating uniformly random elements.
If $a_\ell$ is known, and if the elements of geodesic length $\ell$ can be enumerated effectively, then a uniformly random element of geodesic length $\ell$ can be chosen.
Such a uniform random generator has recently been described for the braid monoid with the set of atoms as generating set~\cite{GGM2012}.  (That is, a positive braid of given \emph{word length} is chosen with a uniform probability.)  For many purposes, however, being able to generate uniformly random braids with given length with respect to the set of simple braids would be more useful.  This problem is significantly harder; improving the methods for computing growth functions is a crucial prerequisite.%
\smallskip

In this paper, we give a new method to compute the growth function of the braid monoid $B_n^+$ that uses a matrix of size $p(n)\times p(n)$, where $p(n)$ is the number of partitions of $n$.
The basic idea is as follows:  We start with the automaton from~\cite{Charney1993} whose states are the finishing sets of simple elements; these finishing sets can be identified with the standard parabolic subgroups of the braid group $B_n$, respectively those of the associated Coxeter group $\Sym_n$.  We will show that it is possible to combine states for which the sizes of the orbits of certain subgroups of $\Sym_n$ agree, arriving at a reduced transition matrix whose rows and columns are indexed by the partitions of $n$.
The entries of the latter can be obtained from the numbers of simple vertex-labelled bipartite graphs with $n$ edges with given degree functions; a vertex of degree $k$ representing an orbit of size $k$.

The structure of the paper is as follows:
In \autoref{S:Counting} we derive a recursive formula for counting simple vertex-labelled bipartite graphs with $n$ edges.
In \autoref{S:Background}, we recall the Artin Garside structure of the braid monoid $B_n^+$, simple elements, greedy normal forms, as well as starting sets and finishing sets and the finite state automata recognising normal forms described in~\cite{Charney1993}; experts may skip this section.
In \autoref{S:GrowthFunction}, we establish that computing the growth function of the braid monoid $B_n^+$ can be reduced to counting simple vertex-labelled bipartite graphs with $n$ edges.
In \autoref{S:Feasibility}, finally, we discuss the feasibility of the new method.
% \smallskip

After the journal version of this paper was accepted, we learnt that some the results obtained here may be connected to results from~\cite{Dehornoy2007}.

%----------------------------------------------------------------------------

\section{Counting vertex-labelled bipartite graphs}\label{S:Counting}%
% In this section, we give a recursive formula for counting the simple vertex-labelled bipartite graphs with $n$ edges.
For a vertex-labelled bipartite graph $\Gamma=(U,V,E)$, where $U=\{u_1,\ldots,u_r\}$ and $V=\{v_1,\ldots,v_s\}$, we can consider the sequences $\deg_U=\big[\deg(u_1),\ldots,\deg(u_r)\big]$ and $\deg_V=\big[\deg(v_1),\ldots,\deg(v_s)\big]$ which satisfy $\deg(u_1)+\cdots+\deg(u_r)=|E|$ and $\deg(v_1)+\cdots+\deg(v_s)=|E|$.

We want to count the simple graphs yielding two given sequences. For $n\in\NN$ and sequences $A=[a_1,\ldots,a_r]$ and $B=[b_1,\ldots,b_s]$ of non-negative integers with
$\sum A = a_1+\cdots+a_r = n = \sum B = b_1+\cdots +b_s$, let $N_{A,B}$ be the number of simple vertex-labelled bipartite graphs $(U,V,E)$ with $\deg_U=A$ and $\deg_V=B$.

\begin{lemma}\label{L:CntGraphs-Symm}%
Let $A=[a_1,\ldots,a_r]$ and $B=[b_1,\ldots,b_s]$, where $\sum A=\sum B$.
\begin{enumerate}[label=\upshape{(\roman{*})},ref=\roman{*}]\vspace{-\itemsep}
\item\label{L:CntGraphs-Symm-swap}
$N_{A,B} = N_{B,A}$.
\item\label{L:CntGraphs-Symm-permut}
If $A'$ and $B'$ are equal to $A$ respectively $B$ up to permutation of entries and addition or removal of entries that are equal to $0$, then $N_{A',B'} = N_{A,B}$.
\end{enumerate}
\end{lemma}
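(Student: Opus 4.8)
The plan is to establish both statements directly from the combinatorial definition of $N_{A,B}$ as a count of simple vertex-labelled bipartite graphs, by exhibiting explicit bijections between the relevant sets of graphs. Recall that a simple vertex-labelled bipartite graph $(U,V,E)$ with $\deg_U = A$ and $\deg_V = B$ is, concretely, a subset $E \subseteq U \times V$ (no repeated edges, since the graph is simple) such that the vertex $u_i$ lies in exactly $a_i$ pairs of $E$ and the vertex $v_j$ lies in exactly $b_j$ pairs of $E$. So $N_{A,B}$ counts such edge sets, and the two parts of the lemma both amount to saying that relabelling the two sides, or permuting vertices within a side, or adjoining isolated vertices, does not change this count.

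For part~\ref{L:CntGraphs-Symm-swap}, I would use the transpose map $E \mapsto E^{t} := \{(v,u) : (u,v)\in E\}$, which is an involution sending a simple bipartite graph $(U,V,E)$ with $\deg_U = A$, $\deg_V = B$ to a simple bipartite graph $(V,U,E^{t})$ with $\deg_V = B$, $\deg_U = A$; hence it restricts to a bijection between the set counted by $N_{A,B}$ and the set counted by $N_{B,A}$, giving $N_{A,B} = N_{B,A}$. For part~\ref{L:CntGraphs-Symm-permut}, first handle permutations: if $A' = [a_{\pi(1)},\ldots,a_{\pi(r)}]$ for a permutation $\pi\in\Sym_r$, then relabelling $u_i \mapsto u_{\pi(i)}$ (equivalently, applying $\pi$ to the first coordinates of all edges) is a bijection from graphs with $\deg_U = A$ to graphs with $\deg_U = A'$ that does not touch $V$, so it preserves the count; the analogous statement holds on the $V$ side, and composing gives $N_{A',B'} = N_{A,B}$ whenever $A',B'$ are permutations of $A,B$. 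Next handle zero entries: if $a_{r} = 0$, then in any graph counted by $N_{A,B}$ the vertex $u_r$ is isolated, so forgetting it gives a bijection onto the graphs on $\{u_1,\ldots,u_{r-1}\}$ and $V$ with degree sequences $[a_1,\ldots,a_{r-1}]$ and $B$; iterating removes all zero entries, and the reverse (adjoining an isolated vertex) adds them back, so $N_{A,B}$ is unchanged. Combining permutation-invariance with zero-insertion/deletion on both sides yields the full statement.

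The only point requiring a little care — and the place I would spend a sentence or two in the write-up — is checking that each of these maps really does land in the correct set of \emph{simple} graphs and respects the prescribed degree sequences, i.e.\ that simplicity and the degree conditions are preserved under transposition, coordinate-permutation, and forgetting/adjoining isolated vertices. Each of these is immediate: transposition is a bijection on $U\times V$ versus $V\times U$ preserving multiplicities, a coordinate-permutation is a bijection on $U\times V$ preserving multiplicities and permuting the degree counts accordingly, and an isolated vertex contributes nothing to $E$. So there is no real obstacle here; the lemma is a formalisation of the evident symmetries of the counting problem, and the proof is a short verification that the obvious bijections behave as claimed.
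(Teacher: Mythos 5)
Your proposal is correct: the explicit bijections (transposition for part (i), vertex relabelling and deletion/insertion of isolated vertices for part (ii)) are exactly the evident symmetries that the paper dismisses with the one-word proof ``Trivial.'' There is no difference in approach, only in the level of detail you chose to record.
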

\begin{proof}
Trivial.
\end{proof}

\begin{lemma}\label{L:CntGraphs-Basis}%
If $A=[a_1,\ldots,a_r]$ and $B=[1,\ldots,1]$, where $\sum A=n=\sum B$, then
\[
N_{A,B} = \frac{n!}{a_1!\cdots a_r!} \;.
\]
\end{lemma}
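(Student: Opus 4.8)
The plan is to set up an explicit bijection between the simple vertex-labelled bipartite graphs counted by $N_{A,B}$ and a family of maps whose cardinality is manifestly $\frac{n!}{a_1!\cdots a_r!}$.

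First I would exploit that every vertex $v_j\in V$ has degree $b_j=1$, so it is incident with exactly one edge and hence joined to exactly one vertex of $U$; there are $n$ such vertices since $\sum B=n$. This lets me encode a graph $\Gamma=(U,V,E)$ of the required shape by the map $f_\Gamma\colon\{1,\ldots,n\}\to\{1,\ldots,r\}$ sending $j$ to the unique index $i$ with $\{u_i,v_j\}\in E$. Conversely, any map $f\colon\{1,\ldots,n\}\to\{1,\ldots,r\}$ yields a bipartite graph with edge set $\{\{u_{f(j)},v_j\} : 1\le j\le n\}$, and this graph is automatically simple: a repeated edge $\{u_i,v_j\}$ would force $\deg(v_j)\ge 2$. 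Thus $\Gamma\mapsto f_\Gamma$ is a bijection from the bipartite graphs with $\deg_V=[1,\ldots,1]$ onto the set of \emph{all} maps $\{1,\ldots,n\}\to\{1,\ldots,r\}$, and under it $\deg(u_i)=|f_\Gamma^{-1}(i)|$.

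Next I would impose the constraint $\deg_U=A$: the graphs counted by $N_{A,B}$ correspond exactly to the maps $f$ with $|f^{-1}(i)|=a_i$ for all $i$, i.e.\ to the ordered tuples $(S_1,\ldots,S_r)$ of pairwise disjoint subsets of $\{1,\ldots,n\}$ with $|S_i|=a_i$ and $S_1\cup\cdots\cup S_r=\{1,\ldots,n\}$. Building such a tuple by choosing its blocks successively gives
\[
N_{A,B}=\prod_{i=1}^{r}\binom{\,n-a_1-\cdots-a_{i-1}\,}{a_i}=\frac{n!}{a_1!\cdots a_r!}\;,
\]
where entries $a_i=0$ cause no trouble because $0!=1$. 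This is the claimed formula.

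There is no real obstacle here; the only point needing (trivial) care is checking that simplicity is vacuous in this situation, so that the count is over all maps rather than some proper subset. One could alternatively deduce the formula by induction on $r$ together with \autoref{L:CntGraphs-Symm}, but the bijective argument is shorter and more transparent.
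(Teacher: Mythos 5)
Your proof is correct and follows essentially the same route as the paper: both encode a graph with $\deg_V=[1,\ldots,1]$ as the function $\{1,\ldots,n\}\to\{1,\ldots,r\}$ sending each degree-one vertex of $V$ to its unique neighbour in $U$, and then count the functions with prescribed fibre sizes by the multinomial coefficient. Your version merely spells out the (trivial) check that simplicity is automatic and the binomial-product computation, which the paper leaves implicit.
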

\begin{proof}
The graphs $(U,V,E)$ with $\deg_U=A$ and $\deg_V=B=[1,\ldots,1]$ are in bijection to the functions $\{1,\ldots,n\}\to\{1,\ldots,r\}$ for which the image $i$ occurs exactly $a_i$ times.
Thus, $N_{A,B}$ is the number of permutations of a multiset whose sequence of multiplicities is~$A$.
\end{proof}

\begin{proposition}\label{P:CntGraphs-Recur}%
If $A=[a_1,\ldots,a_r]$ and $B=[b_1,\ldots,b_s]$, where $\sum A=\sum B$, then
\[
N_{A,B} = \sum_{\substack{I\subseteq\{1,\ldots,r\}\\[0.5ex] |I|=b_1}} N_{A_I,B'} \;,
\]
where $B'=[b_2,\ldots,b_s]$ and $A_I=[a_1',\ldots,a_r']$ with
$a_i' = a_i - \mathbf{1}_I(i)$.
(\/$\mathbf{1}_I$ is the indicator function of the set $I$.)
% $
% a_i' = \begin{cases}
%         a_i-1 & \text{if $i\in I$} \\
%         a_i   & \text{if $i\notin I$}
%        \end{cases} \;.
% $
\end{proposition}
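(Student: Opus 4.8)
The plan is to prove this by a deletion/insertion bijection that removes the vertex $v_1$ of the second colour class.

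First I would fix a simple vertex-labelled bipartite graph $\Gamma=(U,V,E)$ with $U=\{u_1,\ldots,u_r\}$, $V=\{v_1,\ldots,v_s\}$, $\deg_U=A$ and $\deg_V=B$; one may assume $s\ge 1$, since $s=0$ forces $n=0$ and leaves nothing to prove. Because $\Gamma$ is simple, the set of neighbours of $v_1$ is a subset $I=I(\Gamma)\subseteq\{1,\ldots,r\}$ of size $\deg(v_1)=b_1$, identifying a neighbour $u_i$ with its index $i$. Deleting $v_1$ together with its $b_1$ incident edges leaves a simple vertex-labelled bipartite graph $\Gamma'$ on $U$ and $V'=\{v_2,\ldots,v_s\}$; by inspection $\deg_{V'}(\Gamma')=B'$, while $\deg_U(\Gamma')$ is obtained from $A$ by subtracting $1$ from each entry indexed by $I$, i.e.\ equals $A_I$. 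Note this forces $a_i\ge 1$ for $i\in I$, so $A_I$ is a genuine sequence of non-negative integers and $N_{A_I,B'}$ is defined; for the remaining $b_1$-subsets $I$ (those meeting the support of the zero entries of $A$) I would adopt the convention $N_{A_I,B'}=0$, which is consistent since no such $\Gamma$ with $I(\Gamma)=I$ exists. One also checks $\sum A_I=n-b_1=\sum B'$, so all summands make sense.

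Next I would verify that $\Gamma\mapsto\bigl(I(\Gamma),\Gamma'\bigr)$ is a bijection from the set of simple vertex-labelled bipartite graphs on $U,V$ with $\deg_U=A$, $\deg_V=B$ onto the disjoint union, over all $I\subseteq\{1,\ldots,r\}$ with $|I|=b_1$, of the sets of simple vertex-labelled bipartite graphs on $U,V'$ with $\deg_U=A_I$, $\deg_{V'}=B'$. The inverse sends a pair $(I,\Gamma')$ to the graph obtained from $\Gamma'$ by adjoining a new vertex $v_1$ joined by an edge to $u_i$ for each $i\in I$: simplicity is preserved because every new edge is incident to the new vertex, and since $|I|=b_1$ the degree of each $u_i$ is restored to $a_i$ while $v_1$ gets degree $b_1$. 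The two constructions are manifestly mutually inverse, so the claim follows, and counting both sides of the bijection yields
\[
N_{A,B}=\sum_{\substack{I\subseteq\{1,\ldots,r\}\\ |I|=b_1}}N_{A_I,B'}\,.
\]

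I do not expect any real obstacle: this is a routine recursion of the type used to enumerate realisations of bipartite degree sequences. The only points needing a sentence of care are the degenerate cases — when $s=1$ the sequence $B'$ is empty, $A_I=[0,\ldots,0]$, and $N_{A_I,B'}=1$ correctly counts the unique edgeless graph — and the vanishing, under the stated convention, of the summands indexed by subsets $I$ that would place an edge at a degree-$0$ vertex of $U$.
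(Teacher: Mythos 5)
Your proof is correct and takes essentially the same route as the paper: remove the vertex $v_1$, observe that simplicity makes its $b_1$ incident edges correspond to a $b_1$-subset $I$ of $\{1,\ldots,r\}$, and note that the remaining edges form a simple bipartite graph with degree sequences $A_I$ and $B'$. Your extra remarks on the degenerate cases (entries of $A_I$ that would become negative, and $s=1$) are sensible housekeeping that the paper leaves implicit, but they do not change the argument.
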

\begin{proof}
Consider the vertex $v_1$ of degree $b_1$.  As we want to count simple graphs, there can be at most one edge from $v_1$ to every vertex in $U$; the $b_1$ edges ending at $v_1$ thus determine a unique $b_1$-subset of $\{1,\ldots,r\}$, and vice versa.

For a fixed configuration of edges ending at $v_1$, the remaining $n-b_1$ edges form a simple vertex-labelled bipartite graphs $(U',V',E')$, for which $\deg_{U'}=A_I$ and $\deg_{V'}=B'$ with $A_I$ and $B'$ as in the statement of the proposition.
Thus the claim is shown.
\end{proof}

By \autoref{L:CntGraphs-Symm}, $N_{A,B}$ only depends on the types of the partitions of $n$ given by $A$ respectively $B$, and we have $N_{A,B}=N_{B,A}$.  There are thus $\tfrac12p(n)\big(p(n)+1\big)$ numbers to compute, where $p(n)$ is the number of partitions of $n$.

\begin{corollary}\label{C:CntGraphs-Compl}%

The numbers $N_{A,B}$ for $\sum A=\sum B\le n$ can be computed in time $O\big(c^ {\sqrt{n}}\big)$ with $c<569$.
\end{corollary}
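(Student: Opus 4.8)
The plan is to use the recurrence from Proposition~\ref{P:CntGraphs-Recur} together with the reduction via Lemma~\ref{L:CntGraphs-Symm}, organising the computation so that only the $\tfrac12 p(n)(p(n)+1)$ essentially-distinct values $N_{A,B}$ (indexed by pairs of partitions of the same integer $m\le n$) are ever computed, and each is computed once. The key is to bound three quantities: the number of values to be computed, the cost of evaluating one value via the recurrence, and the overhead of the bookkeeping (indexing partitions, collecting terms). Since the final bound is of the form $O(c^{\sqrt n})$, the dominant contribution will be the number of table entries times the per-entry cost, and I expect each of these to be of subexponential type $e^{O(\sqrt n)}$, whose product is again of that type.

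First I would recall the classical Hardy--Ramanujan asymptotics $p(m)\sim \frac{1}{4m\sqrt3}\exp\!\bigl(\pi\sqrt{2m/3}\bigr)$, so that $\sum_{m\le n}\tfrac12 p(m)(p(m)+1) = e^{O(\sqrt n)}$; more precisely $p(n) < e^{\pi\sqrt{2n/3}}$, and $\pi\sqrt{2/3} \approx 2.5651$, which is where a constant like $569 = e^{2\cdot 2.5651\cdot\ldots}$ plausibly originates (the factor $2$ coming from the quadratic dependence on $p(n)$, since a pair of partitions is involved). So the number of entries in the full table is $O(c_1^{\sqrt n})$ with $c_1 = e^{2\pi\sqrt{2/3}} \approx 569$. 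The second step is to evaluate the per-entry cost: to compute a single $N_{A,B}$ from Proposition~\ref{P:CntGraphs-Recur} one sums over $b_1$-subsets $I$ of $\{1,\ldots,r\}$, but after passing to partition types, subsets $I$ giving the same multiset $A_I$ can be grouped, so the sum has at most (number of partitions of $\le n$) distinct summand-types, each looked up in $O(1)$ amortised time via a precomputed index of partitions; one must also count, for each resulting type, the multinomial number of subsets $I$ producing it, which is a bounded arithmetic operation. Hence one entry costs $e^{O(\sqrt n)}$ as well, with the same kind of exponent.

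The third step is to handle the base cases and the bookkeeping: Lemma~\ref{L:CntGraphs-Basis} gives the initial values when $B=[1,\ldots,1]$ directly as multinomial coefficients, computed in polynomial time, and Lemma~\ref{L:CntGraphs-Symm} lets me always reduce $B$ via the recurrence (stripping off $b_1$) while keeping $A$ in a canonical sorted form, so the recursion terminates. Enumerating all partitions of all $m\le n$ and building a lookup table (e.g.\ a ranking of partitions) takes time polynomial in $p(n)$, hence $e^{O(\sqrt n)}$; arithmetic on the numbers $N_{A,B}$, which are bounded by $n!\le n^n$, costs $\mathrm{poly}(n)$ per operation and is absorbed. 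Putting the pieces together, the total time is (number of entries) $\times$ (cost per entry) $\times\ \mathrm{poly}(n) = e^{O(\sqrt n)}$, and one tracks the constant in the exponent to see it is $<\log 569$, giving the stated $O(c^{\sqrt n})$ with $c<569$.

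**The main obstacle** is pinning down the constant $c<569$ rather than merely proving a bound of the form $O(c^{\sqrt n})$ for some unspecified $c$: one must be careful that the per-entry cost does not contribute an extra factor that pushes the effective base above $569$. The natural resolution is that, when the recurrence is applied with $A$ kept in canonical form and summands grouped by the type of $A_I$, the number of summand-types for a fixed entry is at most $p(n)$, not $p(n)^2$, so the per-entry cost is $O(p(n)\cdot\mathrm{poly}(n)) = O(c_0^{\sqrt n})$ with $c_0 = e^{\pi\sqrt{2/3}}\approx 23.8$; multiplying the $c_1^{\sqrt n}\approx 569^{\sqrt n}$ table size by this would overshoot. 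So the sharper accounting must be that the \emph{total} work is $\sum_{\text{entries}}(\text{cost of that entry})$, and each entry's cost is dominated by a sum whose length is the number of \emph{target} partition types, which when summed against the number of entries telescopes to something still of order $e^{2\pi\sqrt{2n/3}}\cdot\mathrm{poly}(n)$ — i.e.\ the $p(n)^2$-type count already \emph{includes} the summation work, because computing all entries of ``row $B$'' from ``row $B'$'' visits each (source type, target type) pair once. Making this charging argument precise, so that the $569$ is genuinely the base and the polynomial factors are genuinely subsumed, is the delicate point; everything else is routine.
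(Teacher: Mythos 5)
There is a genuine gap: the per-entry cost of evaluating the recurrence is never bounded in a way that yields the base $569$, and the key idea of the paper's proof is exactly the one you are missing. The paper uses \autoref{L:CntGraphs-Symm} to normalise so that $b_1=\min(B)\le\min(A)$; this forces $|A|\le n/b_1$, whence the number of terms in \autoref{P:CntGraphs-Recur} is at most
\[
\binom{|A|}{b_1}\;\le\;\binom{\lfloor n/b_1\rfloor}{b_1}\;\le\;\left(\frac{ne}{b_1^2}\right)^{b_1}\;\le\;e^{2\sqrt{n/e}}\approx 3.37^{\sqrt{n}} ,
\]
uniformly in $b_1$. The total cost is then (number of entries) $\times$ (terms per entry) $\times$ (cost of one addition, controlled via $\ln f(n)\le n^2$), i.e.\ $O\big(p(n)^2\,n^2\,e^{2\sqrt{n/e}}\big)$, and the base is $e^{2\pi\sqrt{2/3}}\cdot e^{2/\sqrt{e}}\approx 169.1\times 3.37\approx 568.7<569$. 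Your numerology is off at precisely this point: $e^{2\pi\sqrt{2/3}}\approx 169$, not $569$, so the table size alone does not account for the constant; the missing factor $e^{2/\sqrt{e}}$ in the base \emph{is} the per-entry cost, and your proof never produces a bound of that quality.

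Neither of your two substitutes closes this gap. Grouping the summands $N_{A_I,B'}$ by the partition type of $A_I$ bounds the number of distinct summands only by roughly $p(n)\approx 13^{\sqrt{n}}$, and this cannot be improved to $\mathrm{poly}(n)$: for $A=[k,k-1,\ldots,1]$ (so $k\approx\sqrt{2n}$) all $\binom{k}{b_1}$ subsets $I$ produce distinct types, which for $b_1\approx k/2$ is about $2^{\sqrt{2n}}$. Multiplying $p(n)^2$ entries by a $p(n)$-type per-entry cost gives $p(n)^3\approx 2198^{\sqrt{n}}$ --- not coincidentally the constant appearing in \autoref{P:Refinements}, but not the one claimed here. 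The ``charging/telescoping'' argument you offer as a rescue does not work either: for each fixed column $B$ one must, in the worst case, visit a (source type, target type) pair for each $A$ and each type of $A_I$, and there are $p(m)$ choices of $B$, so the aggregate count is of order $p(n)^3$, not $p(n)^2\cdot\mathrm{poly}(n)$; nothing telescopes because the number of summand types per entry is genuinely exponential in $\sqrt{n}$. The correct fix is to abandon type-grouping for this step and use the $\min(B)\le\min(A)$ normalisation together with the binomial estimate above.
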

\begin{proof}
First assume that $N_{A',B'}$ is known and bounded by $f(n-1)$ for all sequences $A'$ and $B'$ with $\sum A'=\sum B'\le n-1$.

For $S=[s_1,\ldots,s_t]$ define $\min(S) = \min\{s_1,\ldots,s_t\}$ and $|S|=t$.
By \autoref{L:CntGraphs-Symm}, we may assume that $b_1=\min(B)\le\min(A)$ in the statement of \autoref{P:CntGraphs-Recur}, and thus $|A|\le\frac{n}{b_1}$.
The equality for $N_{A,B}$ therefore involves
\[
 \binom{|A|}{b_1}
 \le \binom{\lfloor\frac{n}{b_1}\rfloor}{b_1}
 \le \left(\frac{ne}{b_1^2}\right)^{b_1}
 \le e^{2\sqrt{\frac{n}{e}}}
\]
terms, whence $N_{A,B}$ can be computed in time $O\Big(e^{2\sqrt{\frac{n}{e}}}\,\ln f(n-1)\Big)$.
Moreover, $\ln f(n)\le\ln f(n-1)+2\sqrt{\frac{n}{e}}$.
As $f(1)=1$ by \autoref{L:CntGraphs-Basis}, one has $\ln f(n)\le n^2$.

Thus, knowing $N_{A',B'}$ for all sequences $A'$ and $B'$ with $\sum A'=\sum B'\le n-1$, the numbers $N_{A,B}$ for all sequences $A$ and $B$ with $\sum A=\sum B=n$ can be computed in time $O\Big(|\Part|^2n^2e^{2\sqrt{\frac{n}{e}}}\Big)=O\big(\widetilde{c}^ {\sqrt{n}}\big)$ with $\widetilde{c}<569$, where the last step follows with the asymptotic expression~\cite{HardyRamanujan1918}
\[
|\Part|=p(n)\sim\frac1{4n\sqrt3} e^{\pi\sqrt{\frac{2n}{3}}} \;.
\]
As $\sum_{m=1}^n \widetilde{c}^ {\sqrt{m}} < n\widetilde{c}^{\sqrt{n}}$ and $n\in O\big(a^{\sqrt{n}}\big)$ for any $a>1$, the claim follows.
\end{proof}

\begin{myremark}\label{R:CntGraphs}%
The choice $b_1=\min(B)\le\min(A)$ which was used in the proof of \autoref{C:CntGraphs-Compl} need not yield the smallest possible number of terms.  In practice, one should compare the binomial coefficients
\[
\binom{|A|}{\min(B)} \;,\;\;\;
\binom{|A|}{\max(B)} \;,\;\;\;
\binom{|B|}{\min(A)} \;\;\text{and}\;\;\;
\binom{|B|}{\max(A)} \;,
\]
where $\max\big([s_1,\ldots,s_t]\big) = \max\{s_1,\ldots,s_t\}$, and apply \autoref{P:CntGraphs-Recur} to the situation that corresponds to the smallest binomial coefficient.
\end{myremark}

%----------------------------------------------------------------------------

\section{Garside structure and geodesic automation}\label{S:Background}%
In this section, we recall the finite state automata recognising normal forms that were described in~\cite{Charney1993}, using the terminology of Garside monoids~\cite{Dehornoy2002}.
For details, we refer to~\cite{Bessis2003,BrieskornSaito1972,Charney1993,Dehornoy2002,Deligne1972,El-RifaiMorton1994,Epstein1992}.
Experts may skip this section.

\subsection{Garside normal forms}\label{S:NormalForms}%
Garside monoids of spherical type were shown to be automatic in~\cite{Dehornoy2002}.
The automatic structure of a general Garside monoid of spherical type is closely related to the \emph{greedy normal form}; it is a generalisation of one of the automatic structures described in~\cite{Charney1993}.
\medskip

In a cancellative monoid $M$, we can define the \emph{prefix} partial order:
For $x,y\in M$, we say $x\preccurlyeq y$ if there exists $c\in M$ such that $xc=y$.
Similarly, we define the \emph{suffix} partial order by saying that $x\succcurlyeq y$ if there exists $c\in M$ such that $x=cy$.
We call $s\in M$ an \emph{atom}, if $s=ab$ (with $a,b\in M$) implies $a=1$ or $b=1$.

A cancellative monoid $M$ is called a \emph{Garside monoid of spherical type}, if it is a lattice (i.e., least common multiples and greatest common divisors exist and are unique) with respect to $\preccurlyeq$ and with respect to $\succcurlyeq$, if there are no strict infinite descending chains with respect to either $\preccurlyeq$ or $\succcurlyeq$, and if there exists an element $\Delta\in M$, such that
$D=\{s\in M \mid s\preccurlyeq\Delta\}=\{s\in M \mid \Delta\succcurlyeq s\}$ is finite and generates $M$.
In this case, we call $\Delta$ a \emph{Garside element} and the elements of $D$ the \emph{simple elements} (with respect to $\Delta$).  Moreover, we denote the $\preccurlyeq$-gcd and the $\preccurlyeq$-lcm of $x,y\in M$ by $x\wedge y$ respectively $x\vee y$, and the $\succcurlyeq$-gcd and the $\succcurlyeq$-lcm of $x,y\in M$ by $x\rwedge y$ respectively $x\rvee y$.
It follows that, for $s\in D$, there exists a unique element $\partial s\in D$ such that $s\,\partial s=\Delta$.
\medskip

We assume for the rest of this section that $M$ is a Garside monoid as above.
As $D$ generates $M$, every element $x\in M$ can be written in the form $x=x_1\cdots x_k$ with $x_1,\ldots,x_k\in D$.
The representation as a product of this form can be made unique by requiring that each simple factor is non-trivial and maximal with respect to $\preccurlyeq$.  More precisely, we say that $x=x_1\cdots x_k$ is in \emph{(left) normal form}, if $x_k\ne 1$ and if $x_i = \Delta\wedge(x_i\cdots x_k)$ for $i=1,\ldots,k$.
Equivalently, we can require
\begin{equation}\label{E:NormalForm}
 x_k\ne 1 \quad\text{and}\quad \partial x_i \wedge x_{i+1} = 1 \text{\, for \,} i=1,\ldots,k-1 \;.
\end{equation}
Since $M$ is cancellative, it follows easily by induction that the normal form of $x\in M$ is a geodesic representative of $x$ with respect to the generating set $D$.

Importantly, (\ref*{E:NormalForm}) is a \emph{local} condition, only involving pairs of adjacent factors.  Moreover, if $x_1\cdots x_k\in D^*$ is in normal form, then so is every initial subword $x_1\cdots x_\ell$ for $\ell\le k$.
Thus, we can check whether a word $x_1\cdots x_k\in D^*$ is in normal
form by reading it from the left to the right, only keeping track of the last read symbol:\,
If $x_1\cdots x_\ell$ is in normal form, testing whether $x_{\ell+1}\ne1$ and $\partial x_\ell \wedge x_{\ell+1}=1$ suffices to decide whether $x_1\cdots x_{\ell+1}$ is in normal form.

In other words, the following finite state automaton recognises normal forms:
\begin{center}\begin{tabular}{llll}
\hline
 \textbf{states}\rule[10pt]{0pt}{0pt}
  & \textbf{initial state}
  & \textbf{accept states}
  & \textbf{transition function}
\\ \hline
 $D$\rule[-15pt]{0pt}{37pt}
  & $\Delta$
  & $D\setminus\{1\}$
  & $(X,x_i)\mapsto\begin{cases}
                      x_i & \text{if $\partial X\wedge x_i=1$}\\
                      1   & \text{if $\partial X\wedge x_i\ne1$}
                   \end{cases}$ \\ \hline
\end{tabular}\end{center}
Note that, since $\partial\Delta=1$, the automaton will be in an accept state after reading the first symbol $x_1$, unless $x_1=1$.  (A single non-trivial simple element is in normal form.)
Note further that, since $\partial1=\Delta$, once the automaton has reached the fail state~$1$, it will remain in this state.
\medskip

The Garside structure of Artin monoids was described in~\cite{BrieskornSaito1972,Deligne1972}.
If $M$ is an Artin monoid, its simple elements are lifts of the elements of the corresponding Coxeter group $W$, its Garside element is the lift of the longest element of $W$, and its atoms are lifts of the simple reflections in $W$.%
\footnote{We remark that Artin \emph{groups}, apart from the Garside structure used here, admit another Garside structure, the so-called \emph{dual Garside structure}~\cite{Bessis2003}; the Garside structure used here is known as the \emph{classical Garside structure}.  However, the monoids (of positive elements) associated to these two Garside structures of an Artin group are different.  The monoid we refer to as ``Artin monoid of type $T$'' is the monoid of positive elements of the Artin group of type $T$ with respect to the Coxeter-type generators.}
% The automaton obtained from this Garside structure is the first one of the automata discussed in~\cite{Charney1993}.
Moreover, the Garside element is equal to both the $\preccurlyeq$-lcm of the atoms and the $\succcurlyeq$-lcm of the atoms.

In the case of the Artin monoid of type $A_{n-1}$, the braid monoid $B_n^+$ on $n$ strands, the set of simple elements is in bijection to the symmetric group $\Sym_{n-1}$, so the automaton arising from the Garside structure has $(n-1)!$ states.

\subsection{Starting and finishing sets}\label{S:FinishingSets}%
In the special case of Artin monoids, an element $x\in M$ is simple if and only if it is square-free, that is, if and only if it cannot be written as $x=ua^2v$ with $a,u,v\in M$ and $a\ne 1$.
This property makes it possible to recognise normal forms with an automaton involving fewer states.
We assume for the rest of this section that $M$ is an Artin monoid with Garside element $\Delta$ and set of simple elements $D$.  Let $\A$ denote the set of atoms of $M$.
\medskip

Given $x\in M$, we define the \emph{starting set} of $x$ as $S(x)=\{a\in \A \mid a\preccurlyeq x\}$ and the \emph{finishing set} of $x$ as $F(x)=\{a\in \A \mid x\succcurlyeq a\}$.
For any $x\in D$, we have $F(x)\cap S(\partial x)=\emptyset$ and $F(x)\cup S(\partial x)=\A$~\cite[Lemma 4.2]{Charney1993}.  For $u,v\in D$, one therefore has $\partial u\wedge v=1$ if and only if $S(v)\subseteq \A\setminus S(\partial u)= F(u)$.

We can thus check whether a word $x_1\cdots x_k\in D^*$ is in normal
form by reading it from the left to the right, only keeping track of the finishing set of the last read symbol:\,  If $x_1\cdots x_\ell$ is in normal form, testing whether $x_{\ell+1}\ne1$ and $S(x_{\ell+1})\subseteq F(x_{\ell})$ suffices to decide whether $x_1\cdots x_{\ell+1}$ is in normal form.

In other words, the following finite state automaton recognises normal forms; $\PP(\A)$ denotes the power set of the set $\A$ of atoms:
\begin{center}\begin{tabular}{llll}
\hline
 \textbf{states}\rule[10pt]{0pt}{0pt}
  & \textbf{initial state}
  & \textbf{accept states}
  & \textbf{transition function}
\\ \hline
 $\PP(\A)$\rule[-15pt]{0pt}{37pt}
  & $\A$
  & $\PP(\A)\setminus\{\emptyset\}$
  & $(X,x_i)\mapsto\begin{cases}
                      F(x_i)    & \text{if $S(x_i)\subseteq X$}\\
                      \emptyset & \text{if $S(x_i)\not\subseteq X$}
                   \end{cases}$ \\ \hline
\end{tabular}\end{center}
Note that $F(x_i)=\emptyset$ if and only if $x_i=1$.
Thus, the automaton will be in an accept state after reading the first symbol~$x_1$, unless $x_1=1$.
Moreover, once the automaton has reached the fail state~$\emptyset$, it will remain in this state.
\medskip

In the case of the Artin monoid of type $A_{n-1}$, the braid monoid $B_n^+$ on $n$ strands, the set of atoms has $n-1$ elements, so the automaton described in this section has $2^{n-1}$ states.

\begin{myexample}\label{E:B4}
Consider the braid monoid
\[
B_4^+=\big\langle\sigma_1,\sigma_2,\sigma_3
\mid
\sigma_1\sigma_2\sigma_1=\sigma_2\sigma_1\sigma_2,\;
\sigma_2\sigma_3\sigma_2=\sigma_3\sigma_2\sigma_3,\;
\sigma_1\sigma_3=\sigma_3\sigma_1\big\rangle\;.
\]
The set of atoms is $\A=\{ \sigma_1,\sigma_2,\sigma_3\}$, and the simple elements are in bijection to~$\Sym_4$.
Thus, the automaton has 8 states, and there are 24 transitions originating from any given state.  The states and the transition matrix are given below.

Observe that the rank of $T$ is $5$.
Columns $1$, $3$ and $8$ of $T$ are unique, but the following columns of $T$ are identical:  columns $2$ and $5$; columns $4$, $6$, and $7$.

\begin{center}
\begin{tabular}{cc}
\textbf{state} & \textbf{finishing set} \\ \hline
\rule[11pt]{0pt}{0pt}%
 $1$ & $\emptyset$ \\[1pt]
 $2$ & $\{ \sigma_1 \}$ \\[1pt]
 $3$ & $\{ \sigma_2 \}$ \\[1pt]
 $4$ & $\{ \sigma_1, \sigma_2 \}$ \\[1pt]
 $5$ & $\{ \sigma_3 \}$ \\[1pt]
 $6$ & $\{ \sigma_1, \sigma_3 \}$ \\[1pt]
 $7$ & $\{ \sigma_2, \sigma_3 \}$ \\[1pt]
 $8$ & $\A$
\end{tabular}
\qquad
\begin{minipage}[b]{7cm}
\[
T=\begin{pmatrix*}[r]
24 & 21 & 19 & 13 & 21 & 13 & 13 & \;1 \\
 0 &  1 &  1 &  2 &  1 &  2 &  2 &   3 \\
 0 &  1 &  2 &  3 &  1 &  3 &  3 &   5 \\
 0 &  0 &  0 &  1 &  0 &  1 &  1 &   3 \\
 0 &  1 &  1 &  2 &  1 &  2 &  2 &   3 \\
 0 &  0 &  1 &  2 &  0 &  2 &  2 &   5 \\
 0 &  0 &  0 &  1 &  0 &  1 &  1 &   3 \\
 0 &  0 &  0 &  0 &  0 &  0 &  0 &   1
\end{pmatrix*}
\]
\end{minipage}
\end{center}%\vspace{-3ex}
\end{myexample}

%----------------------------------------------------------------------------

\section{Growth function of the braid monoid\texorpdfstring{ $B_n^+$}{}}\label{S:GrowthFunction}%
For an Artin monoid with set of atoms $\A$ we define $\bigvee X = x_1\vee\cdots \vee x_k\in D$ for $\emptyset\neq X=\{x_1,\ldots,x_k\}\subseteq\A$ and $\bigvee\emptyset=1\in D$.
We analogously define $\rbigvee X$.
\begin{lemma}\label{L:Transitions}
Let $M$ be an Artin monoid with set of atoms $\A$ and set of simple elements $D$, and let $X,Y\subseteq\A$.
Then
\begin{multline*}
\lefteqn{\Big|\big\{ s\in D\mid S(s)\subseteq X \text{\, and \;} Y\subseteq F(s) \big\}\Big|} \\
= \bigg|\left\{ t\in D\;\;\big|\;\;
                     t\big({\textstyle\rbigvee} Y\big)\wedge\big({\textstyle\bigvee}\A\setminus X\big)=1
    \text{\, and \;} t\rwedge\big({\textstyle\rbigvee} Y\big)=1 \right\}\bigg|
\end{multline*}
\end{lemma}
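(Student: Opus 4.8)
The plan is to exhibit a bijection between the two sets by showing that the left-hand set is naturally parametrised by simple elements $t$ satisfying the right-hand conditions, via the map $s \mapsto t$ where $t$ is determined by "dividing out" the obligatory factors. The key observation is that the condition $Y \subseteq F(s)$ means that every atom of $Y$ is a suffix of $s$, hence $\rbigvee Y \succcurlyeq s$, i.e. there is a unique $t \in M$ with $s = t \,(\rbigvee Y)$; and since $M$ is cancellative and $s$ is simple (square-free), $t$ is simple as well. So first I would verify that $s \mapsto t$ with $s = t\,(\rbigvee Y)$ is a well-defined bijection from $\{s \in D \mid Y \subseteq F(s)\}$ onto some subset of $D$, and identify its image: $t \in D$ lies in the image exactly when $t\,(\rbigvee Y) \in D$, which by square-freeness is equivalent to $t \rwedge (\rbigvee Y) = 1$ (otherwise a common suffix atom would be squared in the product). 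This disposes of the conditions involving $Y$.

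Next I would translate the starting-set condition $S(s) \subseteq X$, equivalently $S(s) \cap (\A\setminus X) = \emptyset$, into a condition on $t$. Using the standard fact (already recalled in the excerpt, \cite[Lemma 4.2]{Charney1993}) that for a simple $u$ one has $\partial u \wedge v = 1 \iff S(v) \subseteq F(u)$, together with $F(u) = \A \setminus S(\partial u)$, I would rephrase "$S(s)$ avoids $\A\setminus X$" as "$s \wedge (\bigvee \A\setminus X) = 1$", i.e. $s$ has no prefix atom outside $X$. Then, substituting $s = t\,(\rbigvee Y)$, the condition $t\,(\rbigvee Y) \wedge (\bigvee \A\setminus X) = 1$ is precisely the first condition on the right-hand side. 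I would check carefully that $s \wedge (\bigvee\A\setminus X) = 1$ is genuinely equivalent to $S(s) \subseteq X$: the forward direction is immediate since each atom in $S(s)\cap(\A\setminus X)$ would be a common prefix; the reverse uses that the $\preccurlyeq$-gcd of $s$ with $\bigvee(\A\setminus X)$, being a prefix of $\bigvee(\A\setminus X)$ and simple, has starting set contained in $\A\setminus X$, yet it is also a prefix of $s$, so if nontrivial it contributes an atom to $S(s)\cap(\A\setminus X)$.

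Assembling these equivalences gives: $s \in D$ with $S(s)\subseteq X$ and $Y \subseteq F(s)$ corresponds bijectively to $t \in D$ with $t\,(\rbigvee Y)\wedge(\bigvee\A\setminus X) = 1$ and $t\rwedge(\rbigvee Y) = 1$, which is exactly the asserted equality of cardinalities. The main obstacle I anticipate is the careful handling of the square-freeness arguments — specifically, proving rigorously that $t\,(\rbigvee Y) \in D$ iff $t \rwedge (\rbigvee Y) = 1$, and more generally that a product of two simple elements is simple precisely when their "overlap" (the relevant gcd of finishing/starting data) is trivial. This is where one must invoke that in an Artin monoid simple $=$ square-free, and argue that any nontrivial common suffix of $t$ and common prefix contribution forces a squared atom; the gcd/lcm lattice manipulations in $M$ need to be done with some care to avoid circularity, but no single step should be deep.
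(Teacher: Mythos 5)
Your proposal is correct and follows essentially the same route as the paper's proof: it translates $Y\subseteq F(s)$ into $s=t\,\big(\rbigvee Y\big)$ with $t\in D$ and $t\rwedge\big(\rbigvee Y\big)=1$ via square-freeness, and $S(s)\subseteq X$ into $s\wedge\big(\bigvee\A\setminus X\big)=1$, exactly as the paper does. (The only blemish is the slip $\rbigvee Y\succcurlyeq s$, which in the paper's convention should read $s\succcurlyeq\rbigvee Y$; your subsequent formula $s=t\,\big(\rbigvee Y\big)$ already reflects the correct meaning.)
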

\begin{proof}
The submonoid of $M$ generated by $Y$ is an Artin monoid with Garside element $\bigvee Y$~\cite{BrieskornSaito1972}.  In particular, $\rbigvee Y=\bigvee Y$.
As an element of $M$ is simple if and only it is square-free, this implies that $t\big(\rbigvee Y\big)$, for $t\in D$, is simple if and only if no atom in $Y$ is a suffix of $t$, that is, if and only if $t\rwedge\big(\rbigvee Y\big)=1$.

For $s\in D$ one has $S(s)\subseteq X$ if and only if $s\wedge\left(\bigvee\A\setminus X\right)=1$, and
$Y\subseteq F(s)$ if and only if $s\succcurlyeq\rbigvee Y$.
Moreover, $s\succcurlyeq\rbigvee Y$ with $s\in D$ is equivalent to $s=t\big(\rbigvee Y\big)$ for $t\in D$ such that $t\big(\rbigvee Y\big)$ is simple, that is, such that $t\rwedge\big(\rbigvee Y\big)=1$
\end{proof}

For the remainder of the paper, we restrict our attention to the braid monoid~$B_n^+$.  The set of atoms is $\A=\{\sigma_1,\ldots,\sigma_{n-1}\}$ and the canonical map $\pi:D\to\Sym_n$ is a bijection, where $\pi(\sigma_i)$ is the transposition $(i\hspace{8pt}i+1)$.
In particular, $s\in D$ is uniquely determined by the permutation $\pi_s=\pi(s)\in \Sym_n$ it induces on the $n$ strands.  More precisely, for $s\in D$ and $i=1,\ldots,n-1$, one has $\sigma_i\preccurlyeq s$ if and only if $\pi_s(i)>\pi_s(i+1)$, and $s\succcurlyeq\sigma_i$ if and only if $\pi_s^{-1}(i)>\pi_s^{-1}(i+1)$~\cite{Epstein1992}.

For any $S\subseteq\A$, the subgroup $\langle\pi(S)\rangle$ of~$\Sym_n$ generated by $\pi(S)$ is a standard parabolic subgroup of~$\Sym_n$.
Each orbit of $\langle\pi(S)\rangle$ is a set of consecutive points; an orbit of $\langle\pi(S)\rangle$ of size $s>1$ corresponds to a maximal irreducible standard parabolic subgroup of $\langle\pi(S)\rangle$ generated by $s-1$ transpositions.
Let $\Orb(S)=[O_1,\ldots,O_r]$ be the orbits of $\langle\pi(S)\rangle$, arranged in their natural order (that is, such that for $1\le a<b\le r$, one has $i<j$ for any $i\in O_a$ and any $j\in O_b$), and let $\deg(S) = \big[|O_1|,\ldots,|O_r|\big]$ be their sizes.

The above immediately yields the following lemma.

\begin{lemma}\label{L:SubsetVsSequence}
For any sequence $A=[a_1,\ldots,a_r]$ with $\sum A=n$, there exists a unique subset $S\in\PP(\A)$ such that $\deg(S)=A$.
\end{lemma}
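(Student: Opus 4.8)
The plan is to exploit the correspondence between subsets $S\subseteq\A=\{\sigma_1,\dots,\sigma_{n-1}\}$ and subsets of the ``gap set'' $\{1,\dots,n-1\}$, where $i$ records whether $\sigma_i\in S$. Under this identification, the standard parabolic subgroup $\langle\pi(S)\rangle$ of $\Sym_n$ decomposes $\{1,\dots,n\}$ into orbits that are exactly the maximal runs of consecutive integers linked by the chosen transpositions: a maximal block of consecutive gaps $i,i+1,\dots,i+k-1$ all lying in $S$ produces a single orbit $\{i,i+1,\dots,i+k\}$ of size $k+1$, while each gap $j\notin S$ is a ``break'' separating two orbits. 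Thus $\Orb(S)$ is determined by, and determines, the partition of $\{1,\dots,n-1\}$ into $S$ and its complement, read left to right.

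First I would make this precise: given $A=[a_1,\dots,a_r]$ with $\sum A=n$, define $S$ to be the set of those $\sigma_i$ for which $i$ does \emph{not} coincide with one of the partial sums $a_1,\ a_1+a_2,\ \dots,\ a_1+\cdots+a_{r-1}$; equivalently, $S$ contains $\sigma_i$ exactly when $i$ and $i+1$ lie in the same block of the composition $A$ of $n$. A short induction on $r$ (or a direct check) then shows that the orbits of $\langle\pi(S)\rangle$, in their natural order, have sizes $a_1,\dots,a_r$, so $\deg(S)=A$. This gives existence.

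For uniqueness, I would argue that $S$ can be recovered from $\deg(S)$: the positions of the breaks $\{1,\dots,n-1\}\setminus\{i:\sigma_i\in S\}$ are precisely the nonzero partial sums $a_1,\ a_1+a_2,\ \dots,\ a_1+\cdots+a_{r-1}$ of $A=\deg(S)$, by the description of orbits as maximal runs of consecutive points. Hence two subsets with the same $\deg$ have the same complement in $\A$, so they are equal. Assembling existence and uniqueness yields the claim; the map $S\mapsto\deg(S)$ is in fact the standard bijection between subsets of $\A$ and compositions of $n$ into positive parts.

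The only point requiring genuine care — and the main obstacle, such as it is — is verifying rigorously that the orbits of a standard parabolic subgroup of $\Sym_n$ generated by a set of adjacent transpositions are exactly the maximal intervals of consecutive points joined by the selected transpositions, and that they occur in the stated natural left-to-right order with no further identifications. This is where one uses that $\Sym_n$ is generated by adjacent transpositions with the property that $\sigma_i$ and $\sigma_j$ commute when $|i-j|\ge 2$, so that distinct maximal runs act on disjoint intervals and cannot merge; everything else is bookkeeping with partial sums.
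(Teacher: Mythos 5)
Your proposal is correct and follows essentially the same route as the paper: both arguments rest on the fact (stated just before the lemma) that the orbits of $\langle\pi(S)\rangle$ are intervals of consecutive points in natural order, so that $\deg(S)$ determines the orbits via partial sums, and $S$ is recovered from the orbits by the rule that $\sigma_i\in S$ if and only if $i$ and $i+1$ lie in the same orbit. Your version merely spells out the bookkeeping (the bijection with compositions of $n$) that the paper's two-sentence proof leaves implicit.
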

\begin{proof}
The sequence $\deg(S)$, together with the fact that the orbits are arranged in their natural order, determines the orbits of $\langle\pi(S)\rangle$.
Moreover, the points $i$ and $i+1$ are in the same orbit of $\langle\pi(S)\rangle$ if and only if $\sigma_i\in S$.
\end{proof}

Recall from \autoref{S:Counting} that for two sequences $A$ and $B$ of non-negative integers, each of whose sum is $n$, we denote by $N_{A,B}$ the number of simple vertex-labelled bipartite graphs $(U,V,E)$ for which $\deg_U=A$ and $\deg_V=B$.
Further, recall the automaton recognising normal forms described in \autoref{S:FinishingSets}; we call this automaton $\T$. The states of $\T$ are subsets of $\A$, namely the finishing set of the last read symbol $x_i\in D$.

\begin{theorem}\label{T:Transitions}%
If $X,Y\subseteq\A$ and $Y\ne\emptyset$, then the number of transitions from the state $X$ of $\T$ to an accept state $Y'$ of $\T$ with $Y\subseteq Y'$ is $N_{\deg(Y),\deg(\A\setminus X)}$.
\end{theorem}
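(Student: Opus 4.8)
The plan is to translate the combinatorial statement about transitions of $\T$ into the algebraic count provided by \autoref{L:Transitions}, and then to recognise the latter as $N_{\deg(Y),\deg(\A\setminus X)}$ by exhibiting an explicit bijection with simple vertex-labelled bipartite graphs. First I would unwind the definitions: a transition from state $X$ on input $x_i\in D$ lands in the state $F(x_i)$ precisely when $S(x_i)\subseteq X$, and this target state $Y'=F(x_i)$ satisfies $Y\subseteq Y'$ exactly when $Y\subseteq F(x_i)$; moreover $F(x_i)\ne\emptyset$ (i.e.\ the target is an accept state) is automatic once $Y\ne\emptyset$ and $Y\subseteq F(x_i)$. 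Hence the number of such transitions equals $\big|\{s\in D\mid S(s)\subseteq X\text{ and }Y\subseteq F(s)\}\big|$, which is the left-hand side of \autoref{L:Transitions}.

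Next I would apply \autoref{L:Transitions} to rewrite this as the number of $t\in D$ with $t\big(\rbigvee Y\big)\wedge\big(\bigvee\A\setminus X\big)=1$ and $t\rwedge\big(\rbigvee Y\big)=1$. The key is to interpret these two conditions via the orbit structure. Write $\Orb(Y)=[O_1,\ldots,O_r]$ with sizes $\deg(Y)=[b_1,\ldots,b_r]$, and $\Orb(\A\setminus X)=[P_1,\ldots,P_\ell]$ with sizes $\deg(\A\setminus X)=[c_1,\ldots,c_\ell]$. Such a $t\in D$ corresponds to a permutation $\pi_t\in\Sym_n$; the condition $t\rwedge\big(\rbigvee Y\big)=1$ says that within each block $O_j$, $\pi_t^{-1}$ is increasing (no atom of $Y$ is a suffix of $t$), i.e.\ $\pi_t$ maps each block $O_j$ to an increasing sequence of values; and the condition $t\big(\rbigvee Y\big)\wedge\big(\bigvee\A\setminus X\big)=1$ says $S\big(t(\rbigvee Y)\big)\cap(\A\setminus X)=\emptyset$, i.e.\ for each $\sigma_i$ with $i,i+1$ in a common block $P_k$, $\sigma_i$ is not a prefix of $t(\rbigvee Y)$; since right-multiplying by $\rbigvee Y$ sorts each $O_j$-block of values upward, this translates into a descent/monotonicity condition on the images of the $P_k$-blocks.

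I would then set up the bijection to bipartite graphs: to each admissible $t$ associate the graph $\Gamma=(U,V,E)$ with $U=\{u_1,\ldots,u_r\}$ (one vertex per $Y$-orbit $O_j$, of intended degree $b_j=|O_j|$) and $V=\{v_1,\ldots,v_\ell\}$ (one vertex per $(\A\setminus X)$-orbit $P_k$, of intended degree $c_k=|P_k|$), with an edge $\{u_j,v_k\}$ for each element of $O_j$ that $\pi_t$ sends into $P_k$. The monotonicity constraints forced by the two Garside conditions are exactly what makes this map well defined and a bijection: the "$\pi_t$ increasing on each $O_j$'' condition means the pattern of which $P_k$-blocks are hit by $O_j$ is an increasing sequence, so it is determined by (and determines) a \emph{set} of $v_k$'s — giving a \emph{simple} graph rather than a multigraph — and the dual condition on the $P_k$ blocks does the same from the other side, making $\deg_V=[c_1,\ldots,c_\ell]$. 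Conversely, any simple bipartite graph with these degree sequences reconstructs a unique such $t$ by reading off, block by block in natural order, which target blocks receive the points of each source block. Since $\deg_U=\deg(Y)$ and $\deg_V=\deg(\A\setminus X)$, the count is $N_{\deg(Y),\deg(\A\setminus X)}$, as claimed.

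The main obstacle I expect is verifying carefully that the two one-sided monotonicity conditions coming from the Garside-theoretic description of prefixes and suffixes (via $\pi_t(i)>\pi_t(i+1)$ and $\pi_t^{-1}(i)>\pi_t^{-1}(i+1)$) match up \emph{exactly} with simplicity of the graph on both sides simultaneously — i.e.\ that there is no double counting and no missed configuration when the block sizes $b_j$ or $c_k$ exceed $1$. Handling the effect of right-multiplication by $\rbigvee Y$ on the prefix condition (it permutes the values inside each $O_j$-block back to increasing order, so a descent $\sigma_i$ with $i,i+1\in P_k$ in $t(\rbigvee Y)$ need not come from a descent in $t$ itself) is the delicate point; once that is pinned down, the bijection and the degree bookkeeping are routine.
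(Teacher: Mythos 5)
Your overall route is the same as the paper's: reduce to the count in \autoref{L:Transitions}, then biject the admissible $t\in D$ with simple vertex-labelled bipartite graphs on the two orbit sets. The reduction step and the choice of bijection target are correct. There is, however, a genuine gap at the central point. You assert that the two one-sided monotonicity conditions (``$\pi_t$ increasing on each orbit of $\langle\pi(\A\setminus X)\rangle$'' and ``$\pi_t^{-1}$ increasing on each orbit of $\langle\pi(Y)\rangle$'') already force the collection of target blocks hit by a given source block to be a \emph{set}, ``giving a simple graph rather than a multigraph''. This is false: an increasing sequence of images only yields a \emph{weakly} increasing sequence of target blocks, so two consecutive points of one orbit $O_a$ of $\langle\pi(\A\setminus X)\rangle$ can perfectly well land in the same orbit $P_b$ of $\langle\pi(Y)\rangle$, at two distinct increasing positions. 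Permutations satisfying only those two conditions on $t$ are the minimal double coset representatives for the two Young subgroups, and they are counted by non-negative integer matrices with prescribed row and column sums, i.e.\ by bipartite \emph{multi}graphs --- not by $N_{\deg(Y),\deg(\A\setminus X)}$.

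The missing idea --- which you correctly flag as ``the delicate point'' but then defer rather than resolve --- is that \autoref{L:Transitions} imposes the prefix condition on $s=t\big(\rbigvee Y\big)$ rather than on $t$, and this strengthening is exactly equivalent to simplicity of the graph. Concretely, $\pi_{\rbigvee Y}$ reverses each orbit of $\langle\pi(Y)\rangle$; so if $i<i'$ lie in one orbit $O_a$ and $\pi_t(i),\pi_t(i')$ lie in one orbit $P_b$ (a double edge), then since $\pi_t$ is increasing on $O_a$ and $P_b$ is an interval, $\pi_t(i)<\pi_t(i+1)$ both lie in $P_b$, and the reversal turns this into a descent $\pi_s(i)>\pi_s(i+1)$ with $i,i+1\in O_a$, contradicting $s\wedge\big(\bigvee\A\setminus X\big)=1$; conversely, simplicity of the graph is what guarantees that the sorted reconstruction of $\pi_t$ produces an $s$ with no such descent. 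Without carrying out this step (together with the argument that the two monotonicity conditions pin down $\pi_t$ uniquely within its double coset, so the correspondence is a bijection), the proof establishes only the multigraph count, not $N_{\deg(Y),\deg(\A\setminus X)}$.
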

\begin{proof}
A transition $s$ from the state $X$ ends in an accept state if and only if $S(s)\subseteq X$.  By \autoref{L:Transitions}, it only remains to show that%
\[
 N_{\deg(Y),\deg(\A\setminus X)} = \bigg|\left\{ t\in D\;\;\big|\;\;
                     t\big({\textstyle\rbigvee} Y\big)\wedge\big({\textstyle\bigvee}\A\setminus X\big)=1
    \text{\, and \;} t\rwedge\big({\textstyle\rbigvee} Y\big)=1 \right\}\bigg| \;.
\]
We consider vertex-labelled graphs with vertex set $\Orb(\A\setminus X)\cup \Orb(Y)$.
Let $\Orb(\A\setminus X)=[O_1,\ldots,O_r]$ and $\Orb(Y)=[P_1,\ldots,P_s]$.
Given $t\in D$ with $t\big({\textstyle\rbigvee} Y\big)\wedge\big({\textstyle\bigvee}\A\setminus X\big)=1$
and $t\rwedge\big({\textstyle\rbigvee} Y\big)=1$, we define a graph $\Gamma_t$ with vertex set $\Orb(\A\setminus X)\cup \Orb(Y)$ as follows:  For $i=1,\ldots,n$ we add an edge from $O_a$ to~$P_b$, where $O_a$ is the element in $\Orb(\A\setminus X)$ containing $i$ and $P_b$ is the element in $\Orb(Y)$ containing $\pi_t(i)$; it is obvious that this yields a bipartite graph and that the vertices have degrees $\deg(\A\setminus X)$ respectively $\deg(Y)$.

Let $u={\textstyle\rbigvee} Y$ and $s=tu$. As $t\wedge\big({\textstyle\bigvee}\A\setminus X\big)\preccurlyeq s\wedge\big({\textstyle\bigvee}\A\setminus X\big)=1$, we have $\pi_t(j)<\pi_t(j+1)$ if $\{j,j+1\}\subseteq O_a$ for some $a$.  Similarly, as $t\rwedge\big({\textstyle\rbigvee} Y\big)=1$, we have $\pi_t^{-1}(k)<\pi_t^{-1}(k+1)$ if $\{k,k+1\}\subseteq P_b$ for some $b$.
If there were $i,i'\in O_a$ with $i<i'$ and $\pi_t(i),\pi_t(i')\in P_b$ for some $a$ and $b$, the above would imply $\{i,i+1,\ldots,i'\}\subseteq O_a$ and $\{\pi_t(i),\pi_t(i+1),\ldots,\pi_t(i')\}\subseteq P_b$, with $\pi_t(i+1)=\pi_t(i)+1$.  In particular, $\sigma_{\pi_t(i)}\preccurlyeq u$, and then
$\pi_s(i+1)=\pi_u(\pi_t(i+1))=\pi_u(\pi_t(i)+1)<\pi_u(\pi_t(i))=\pi_s(i)$, contradicting $s\wedge\big({\textstyle\bigvee}\A\setminus X\big)=1$.  We have thus shown that the graph $\Gamma_t$ is simple.%
\medskip

Conversely, let $\Gamma$ be a simple vertex-labelled bipartite graph with $n$ edges and vertex classes $\Orb(\A\setminus X)$ and $\Orb(Y)$.  We construct $\pi_{t_\Gamma}\in\Sym_n$, and thus $t_{\Gamma}\in D$, by defining $\pi_{t_\Gamma}(i)$ for $i=1,\ldots,n$ (in this order):  If $i\in O_a$, then let $\pi_{t_\Gamma}(i)$ be the smallest number in the union of the vertices adjacent to $O_a$ that has not been chosen as an image.
If $\{j,j+1\}\subseteq O_a$ for some $a$ then by construction, $\pi_{t_\Gamma}(j)<\pi_{t_\Gamma}(j+1)$, implying $t_{\Gamma}\wedge\big({\textstyle\bigvee}\A\setminus X\big)=1$.
Similarly, if $\{k,k+1\}\subseteq P_b$ for some $b$ then by construction $\pi_{t_\Gamma}^{-1}(k)<\pi_{t_\Gamma}^{-1}(k+1)$,
implying $t\rwedge\big({\textstyle\rbigvee} Y\big)=1$.
Finally, let $u={\textstyle\rbigvee} Y$ and $s=t_{\Gamma}\,u$.
The fact that $\Gamma$ is simple implies that, if $\{j,j+1\}\subseteq O_a$ for some~$a$, then $\pi_{\Gamma_t}(j)$ and $\pi_{\Gamma_t}(j+1)>\pi_{\Gamma_t}(j)$ lie in different orbits of $\langle\pi(Y)\rangle$, whence $\pi_s(j+1)=\pi_u(\pi_{\Gamma_t}(j+1))>\pi_u(\pi_{\Gamma_t}(j))=\pi_s(j)$.
Thus, $s\wedge\big({\textstyle\bigvee}\A\setminus X\big)=1$.
\medskip

It is obvious from the construction that one has $\Gamma_{t_{\Gamma}}=\Gamma$ for any simple vertex-labelled bipartite graph $\Gamma$ with $n$ edges and vertex classes $\Orb(\A\setminus X)$ and $\Orb(Y)$.
Conversely, let $t_1,t_2\in D$ such that $t_i\big({\textstyle\rbigvee} Y\big)\wedge\big({\textstyle\bigvee}\A\setminus X\big)=1$ and $t_i\rwedge\big({\textstyle\rbigvee} Y\big)=1$ for $i=1,2$, and assume $\Gamma_{t_1}=\Gamma_{t_2}$.  The latter implies that $\pi_{t_1}=\pi_{t_2}$ up to permutations of the points within each orbit (or vertex).
More precisely, there exist $\alpha\in\langle\pi(\A\setminus X)\rangle$ and $\beta\in\langle\pi(Y)\rangle$ such that $\pi_{t_2}=\alpha \pi_{t_1}\beta$.
However, if $O_a=\{j,j+1,\ldots,j+k\}$ and $P_{b_0},\ldots,P_{b_k}$ are the vertices adjacent to $O_a$, where $b_0<\cdots<b_k$, then $t_i\wedge\big({\textstyle\bigvee}\A\setminus X\big)=1$ for $i=1,2$ implies that $\pi_{t_i}(j+\ell)\in P_{b_\ell}$ for $\ell=0,\ldots,k$ and $i=1,2$; that is, $\alpha=1$.
Similarly, $t_i\rwedge\big({\textstyle\rbigvee} Y\big)=1$ for $i=1,2$ implies $\beta=1$, that is, $\pi_{t_1}=\pi_{t_2}$ and thus $t_1=t_2$.
We have then shown that the maps $t\mapsto\Gamma_t$ and $\Gamma\mapsto t_{\Gamma}$ are inverses of each other, completing the proof.
\end{proof}

Let $T$ be the transition matrix of $\T$; for $X,Y\subseteq\A$ the entry $T_{Y,X}$ is the number of transitions from the state $X$ of $\T$ to the state $Y$ of $\T$.

\begin{corollary}\label{C:Transitions}%
If $X_1,X_2\subseteq\A$ such that the partitions of $n$ given by $\deg(\A\setminus X_1)$ respectively $\deg(\A\setminus X_2)$ are equal, then $T_{Z,X_1}=T_{Z,X_2}$ for all $Z\subseteq\A$, that is, the columns $X_1$ and~$X_2$ of the transition matrix of $\T$ are identical.
\end{corollary}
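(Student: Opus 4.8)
The plan is to reduce the statement about columns of the transition matrix to the count provided by Theorem \ref{T:Transitions}, using Lemma \ref{L:CntGraphs-Symm}\ref{L:CntGraphs-Symm-permut} to absorb the dependence on the particular subset $X$ into a dependence only on the partition $\deg(\A\setminus X)$. The key observation is that the entry $T_{Z,X}$ — the number of transitions from state $X$ to state $Z$ — should be expressible purely in terms of $\deg(Z)$ (or rather, whether $Z$ is a finishing set at all) and $\deg(\A\setminus X)$.

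First I would recall that the transition function of $\T$ sends $(X, s)\mapsto F(s)$ whenever $S(s)\subseteq X$, and sends it to $\emptyset$ otherwise. Hence for $Z\ne\emptyset$, the entry $T_{Z,X}$ equals the number of simple elements $s\in D$ with $S(s)\subseteq X$ and $F(s)=Z$. For $Z=\emptyset$, the entry $T_{\emptyset,X}$ counts the simple elements $s$ with $S(s)\not\subseteq X$, together with the element $s=1$ (for which $F(1)=\emptyset$); but since the column sums of $T$ are all equal to $|D|=n!$, once all entries $T_{Z,X}$ with $Z\ne\emptyset$ are shown to depend only on $\deg(\A\setminus X)$, the entry $T_{\emptyset,X}$ does too, being $n!$ minus their sum. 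So it suffices to handle $Z\ne\emptyset$.

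Next I would express $T_{Z,X}$ for fixed nonempty $Z$ via inclusion–exclusion over the finishing set. Theorem \ref{T:Transitions} gives, for each nonempty $Y\subseteq\A$, the number of transitions from $X$ into an accept state $Y'$ with $Y\subseteq Y'$, namely $N_{\deg(Y),\deg(\A\setminus X)}$. Writing $g(Y) = N_{\deg(Y),\deg(\A\setminus X)}$ for $Y\ne\emptyset$, the quantity $T_{Z,X}$ (the number of $s$ with $S(s)\subseteq X$ and $F(s)$ exactly $Z$) is obtained from the values $g(Y)$ for $Y\supseteq Z$ by the usual Möbius inversion on the Boolean lattice of subsets of $\A$ containing $Z$:
\[
 T_{Z,X} = \sum_{Y\supseteq Z} (-1)^{|Y\setminus Z|}\, g(Y)\,.
\]
Since the right-hand side is a fixed finite combination of the numbers $N_{\deg(Y),\deg(\A\setminus X)}$, and each such number depends on $X$ only through the partition of $n$ given by $\deg(\A\setminus X)$ (by Lemma \ref{L:CntGraphs-Symm}\ref{L:CntGraphs-Symm-permut}), we conclude that $T_{Z,X}$ depends on $X$ only through that partition. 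Therefore, if $\deg(\A\setminus X_1)$ and $\deg(\A\setminus X_2)$ give the same partition of $n$, then $T_{Z,X_1}=T_{Z,X_2}$ for all $Z\subseteq\A$, which is the claim.

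The main obstacle I anticipate is not the inclusion–exclusion, which is routine, but rather making sure the bookkeeping is airtight: in particular, that Theorem \ref{T:Transitions} genuinely counts transitions-into-supersets (i.e.\ the ``$Y\subseteq Y'$'' clause is an upward-closure condition suitable for Möbius inversion), and that one correctly isolates the $Z=\emptyset$ column using the constant column-sum property. An alternative, perhaps cleaner, route avoids Möbius inversion entirely: run the argument in the proof of Theorem \ref{T:Transitions} but with the finishing set pinned to exactly $Z$ rather than containing $Y$, producing directly a bijection between transitions $s$ from $X$ with $F(s)=Z$ and simple bipartite graphs with vertex classes $\Orb(Z)$ and $\Orb(\A\setminus X)$ that have no isolated vertices on the $\Orb(Z)$ side — a count which again, by Lemma \ref{L:CntGraphs-Symm}, depends on $X$ only through $\deg(\A\setminus X)$.
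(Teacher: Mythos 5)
Your proposal is correct and follows essentially the same route as the paper: apply \autoref{T:Transitions} for all nonempty $Y$, perform M\"obius inversion on the poset $(\PP(\A),\supseteq)$ to recover the individual entries $T_{Z,X}$ for $Z\ne\emptyset$, invoke \autoref{L:CntGraphs-Symm} to see that these depend on $X$ only through the partition given by $\deg(\A\setminus X)$, and handle $Z=\emptyset$ via the constant column sum $n!$. Your write-up is in fact slightly more explicit than the paper's (which leaves the inversion formula implicit), and no gaps are present.
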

\begin{proof}
By \autoref{L:CntGraphs-Symm}, we have $N_{\deg(Y),\deg(\A\setminus X_1)}=N_{\deg(Y),\deg(\A\setminus X_2)}$ for all $Y\subseteq\A$.
Applying \autoref{T:Transitions} for all $\emptyset\ne Y\subseteq\A$ and using M\"obius inversion~\cite{Rota1964} for the poset $(\PP(\A),\supseteq)$, one obtains $T_{Z,X_1}=T_{Z,X_2}$ for all $Z\ne\emptyset$.
As the sum of each column of $T$ is $n!$, the claim for $Z=\emptyset$ follows.
\end{proof}

\begin{myremark}\label{R:Transitions}%
It is not true in general that $T_{Z_1,X}=T_{Z_2,X}$, if the partitions of~$n$ given by $\deg(Z_1)$ respectively $\deg(Z_2)$ are equal.
The reason is that the equality of partitions does not necessarily extend to all supersets of $Z_1$ respectively $Z_2$.

As an example, consider $Z_1=\{\sigma_1\}$ and $Z_2=\{\sigma_2\}$ for $B_4^+$ with $\A=\{\sigma_1,\sigma_2,\sigma_3\}$.
One has $\deg(Z_1)=[2,1,1]$ and $\deg(Z_2)=[1,2,1]$, so the partitions of $4$ that correspond to $\deg(Z_1)$ and $\deg(Z_2)$ are equal.
However, for $Z'=\{\sigma_1,\sigma_3\}\supseteq Z_1$ one has $\deg(Z')=[2,2]$, but no superset of $Z_2$ yields this partition of $4$: $\deg(\{\sigma_1,\sigma_2\})=[3,1]$, $\deg(\{\sigma_2,\sigma_3\})=[1,3]$, and $\deg(\A)=[4]$.

That is, the symmetry of $N_{\deg(Y),\deg(\A\setminus X)}$ under resorting of $\deg(Y)$ is broken by the M\"obius inversion.
\end{myremark}

\begin{myexample}\label{E:B4_partition}
Consider again the braid monoid $B_4^+$.
The following table lists, for each state, the orbits of the subgroup of $\Sym_4$ generated by the complement of the finishing set, and the partition of $4$ given by the sizes of these orbits.
\begin{center}
\begin{tabular}{ccccc}
 \textbf{state}
  & $F$
  & $\A\setminus F$
  & \textbf{$\mathbf{Orb}(\A\setminus F)$}
  & \textbf{partition of 4} \\ \hline
\rule[11pt]{0pt}{0pt}%
 $1$
  & $\emptyset$
  & $\A$
  & $\big[\{ 1,2,3,4 \}\big]$
  & $[4]$     \\[4pt]
 $2$
  & $\{ \sigma_1 \}$
  & $\{ \sigma_2, \sigma_3 \}$
  & $\big[\{ 1 \}\,; \{ 2,3,4 \}\big]$
  & $[3,1]$   \\[4pt]
 $3$
  & $\{ \sigma_2 \}$
  & $\{ \sigma_1, \sigma_3 \}$
  & $\big[\{ 1,2 \}\,; \{ 3,4 \}\big]$
  & $[2,2]$   \\[4pt]
 $4$
  & $\{ \sigma_1, \sigma_2 \}$
  & $\{ \sigma_3 \}$
  & $\big[\{ 1 \}\,; \{ 2 \}\,; \{ 3,4 \}\big]$
  & $[2,1,1]$ \\[4pt]
 $5$
  & $\{ \sigma_3 \}$
  & $\{ \sigma_1, \sigma_2 \}$
  & $\big[\{ 1,2,3 \}\,; \{ 4 \}\big]$
  & $[3,1]$   \\[4pt]
 $6$
  & $\{ \sigma_1, \sigma_3 \}$
  & $\{ \sigma_2 \}$
  & $\big[\{ 1 \}\,; \{ 2,3 \}\,; \{ 4 \}\big]$
  & $[2,1,1]$ \\[4pt]
 $7$
  & $\{ \sigma_2, \sigma_3 \}$
  & $\{ \sigma_1 \}$
  & $\big[\{ 1,2 \}\,; \{ 3 \}\,; \{ 4 \}\big]$
  & $[2,1,1]$ \\[4pt]
 $8$
  & $\A$
  & $\emptyset$
  & $\big[\{ 1 \}\,; \{ 2 \}\,; \{ 3 \}\,; \{ 4 \}\big]$
  & $[1,1,1,1]$
\end{tabular}
\end{center}
Observe that the columns of $T$ (cf.\ \autoref{E:B4}) corresponding to two states are identical, if (and only if) the partitions of~$4$ given by the complements of the finishing sets of the states are equal.  Note also that there are no three identical rows of $T$, although, up to reordering, $\deg(X_i)=[2,1,1]$ for each of the three sets $X_i=\{\sigma_i\}$ for $i=1,2,3$  (cf.\ \autoref{R:Transitions}).
\end{myexample}
\medskip

The idea now is to group states $X\in\PP(\A)$ into classes corresponding to the partition of $n$ given by $\deg(\A\setminus X)$, and to work with a matrix whose rows and columns are indexed by these classes.

\begin{notation}\label{N:Reduction}\quad\vspace*{-\itemsep}
\begin{enumerate}[label=\upshape{(\roman{*})},ref=\roman{*}]
\item\label{N:Reduction-partitions}
Let $\Part$ denote the set of partitions of $n$.
Let $\varphi:\PP(\A)\to\Part$ be the map assigning to each $X\in\PP(\A)$ the partition of $n$ given by $\deg(\A\setminus X)$, and choose $\psi:\Part\to\PP(\A)$ such that $\varphi(\psi(\alpha))=\alpha$ for all $\alpha\in\Part$.

\item\label{N:Reduction-transition}
Define $\TT\in\ZZ^{\Part\times\Part}$ by setting $\displaystyle\TT_{\beta,\alpha} = \sum_{Y\in\varphi^{-1}(\beta)} T_{Y,\psi(\alpha)}$.

By \autoref{C:Transitions}, $\TT_{\beta,\alpha}$ does not depend on the choice of $\psi$.

\item\label{N:Reduction-projection}
Define $P\in\ZZ^{\Part\times\PP(\A)}$ and $Q\in\ZZ^{\PP(\A)\times\Part}$ as follows:
\[
 P_{\alpha,X} =
   \begin{cases}
     1 & \text{if $\alpha=\varphi(X)$} \\
     0 & \text{otherwise}
   \end{cases}
\quad\text{respectively}\quad
 Q_{X,\alpha} =
   \begin{cases}
     1 & \text{if $X=\psi(\alpha)$} \\
     0 & \text{otherwise}
   \end{cases}
\;.
\]

\item\label{N:Reduction-uncorrected}
Define $\widetilde{N}\in\ZZ^{\Part\times\Part}$ as follows:
\[
 \widetilde{N}_{\alpha,\beta} =
   \begin{cases}
     n!               & \text{if $\alpha=[1,\ldots,1]$} \\
     N_{\alpha,\beta} & \text{otherwise}
   \end{cases} \;.
\]

\item\label{N:Reduction-complement/sum}
Define $C\in\ZZ^{\PP(\A)\times\PP(\A)}$ and $S\in\ZZ^{\PP(\A)\times\PP(\A)}$ as follows:
\[
 C_{X,Y} =
   \begin{cases}
     1 & \text{if $X=\A\setminus Y$} \\
     0 & \text{otherwise}
   \end{cases}
\quad\text{respectively}\quad
 S_{X,Y} =
   \begin{cases}
     1 & \text{if $X\subseteq Y$} \\
     0 & \text{otherwise}
   \end{cases} \;.
\]
\end{enumerate}
\end{notation}

\begin{proposition}\label{P:Reduction}\quad\vspace*{-\itemsep}
\begin{enumerate}[label=\upshape{(\roman{*})},ref=\roman{*}]
\item\label{P:Reduction-Moebius}
The matrix $S$ is invertible and $S^{-1}$ is the matrix given by
\[ (S^{-1})_{X,Y} =
   \begin{cases}
     (-1)^{|Y\setminus X|} & \text{if $X\subseteq Y$} \\
     0                     & \text{otherwise}
   \end{cases} \;.
\]

\item\label{P:Reduction-transition}
One has $T=S^{-1}CP^t\widetilde{N}P$ and $\widetilde{T}=PTQ=PS^{-1}CP^t\widetilde{N}$.

\item\label{P:Reduction-growthfunction}
Let $u\in\ZZ^{\PP(\A)\times1}$ and $v\in\ZZ^{1\times\PP(\A)}$ be defined by
\[
 u_{X} =
   \begin{cases}
     1 & \text{if $X = \A$} \\
     0 & \text{otherwise}
   \end{cases}
\quad\text{respectively}\quad
 v_{X} =
   \begin{cases}
     0 & \text{if $X = \emptyset$} \\
     1 & \text{otherwise}
   \end{cases}
\;,
\]
and let $\widetilde{u}\in\ZZ^{\Part\times1}$ and $\widetilde{v}\in\ZZ^{1\times\Part}$ be defined by
\[
 \widetilde{u}_{\alpha} =
   \begin{cases}
     1 & \text{if $\alpha = \varphi(\A) = [1,\ldots,1]$} \\
     0 & \text{otherwise}
   \end{cases}
\quad\text{and}\quad
 \widetilde{v}_{\alpha} =
   \begin{cases}
     0 & \text{if $\alpha = \varphi(\emptyset) = [n]$} \\
     1 & \text{otherwise}
   \end{cases}
\;.
\]
Then for any $k\in\NN$, one has $vT^ku=\widetilde{v}\widetilde{T}^k\widetilde{u}$.
In particular, the growth function of $B_n^+$ can be computed using the matrix $\widetilde{T}$.
\end{enumerate}
\end{proposition}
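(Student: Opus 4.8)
The plan is to prove the three parts in order, since (iii) relies on the factorisations established in (ii), which in turn uses the inverse of $S$ from (i).

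For part (i), I would verify directly that the proposed matrix is a two-sided inverse of $S$. Fixing $X\subseteq Z\subseteq\A$, the $(X,Z)$ entry of the product $S\cdot S^{-1}$ is $\sum_{X\subseteq Y\subseteq Z}(-1)^{|Z\setminus Y|}$, which is $0$ unless $X=Z$ by the standard binomial identity $\sum_{k=0}^{m}\binom{m}{k}(-1)^k=[m=0]$ applied with $m=|Z\setminus X|$. The entry vanishes when $X\not\subseteq Z$ because no index $Y$ contributes. This is just M\"obius inversion for the Boolean lattice, so it is routine.

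For part (ii), the key is to unwind the definitions of $T$ and $\widetilde{T}$ against the combinatorial meaning established in \autoref{T:Transitions}. I would first show $ST=CP^t\widetilde{N}P$ by comparing entries: for $Z,X\subseteq\A$, the $(Z,X)$ entry of $ST$ is $\sum_{Z\subseteq Y}T_{Y,X}$, which counts the transitions from state $X$ of $\T$ landing in some accept state $Y'\supseteq Z$ (this holds even for $Z=\emptyset$ because the only non-accept state is $\emptyset$ and $\emptyset\subseteq\emptyset$ forces us to include $T_{\emptyset,X}$, so I must be slightly careful and use that for $Z=\emptyset$ the sum over all $Y$ gives the column sum $n!$). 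For $Z\ne\emptyset$, \autoref{T:Transitions} identifies this with $N_{\deg(Z),\deg(\A\setminus X)}$, which is exactly $\widetilde{N}_{\varphi(\A\setminus X)^{\text{-ish}}}$— more precisely, $\deg(Z)$ determines a partition $\beta$, $\deg(\A\setminus X)$ determines $\varphi(X)$, and $(CP^t\widetilde{N}P)_{Z,X}=\widetilde{N}_{\varphi(Z)\text{-indexed row?}}$; here I need to be careful, because $P^t$ picks out the partition of $Z$ in the \emph{row} index of $\widetilde{N}$ via $P$ acting on the left is wrong — rereading the notation, $CP^t\widetilde{N}P$ has $(Z,X)$ entry $\sum_{\alpha,\beta}C_{Z,?}\dots$, so I would carefully track: $C$ sends $Z\mapsto\A\setminus Z$, then $P^t$ has $(\A\setminus Z,\alpha)$ entry $1$ iff $\alpha=\varphi(\A\setminus Z)$, i.e. $\alpha$ is the partition of $\deg(Z)$ — wait, $\varphi(\A\setminus Z)$ is the partition given by $\deg(\A\setminus(\A\setminus Z))=\deg(Z)$, good. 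Then $\widetilde{N}_{\alpha,\beta}$ with $\alpha=$ partition of $\deg(Z)$, and finally $P$ on the right picks $\beta=\varphi(X)=$ partition of $\deg(\A\setminus X)$. So the entry is $\widetilde{N}_{\text{part}(\deg Z),\text{part}(\deg(\A\setminus X))}=N_{\deg(Z),\deg(\A\setminus X)}$ for $Z\ne[1,\dots,1]$'s preimage... actually $\widetilde N$ differs from $N$ precisely in the row $\alpha=[1,\dots,1]$, which corresponds to $Z=\A$, i.e. $Z\ne\emptyset$ covers all cases and the $Z=\A$ row must be $n!$ — but \autoref{T:Transitions} already gives $N_{\deg(\A),\deg(\A\setminus X)}=N_{[1,\dots,1],\cdot}$, which by \autoref{L:CntGraphs-Basis} equals $n!/(1!\cdots)=n!$ — hmm, that's $n!$ only when $\deg(\A\setminus X)$ is also all ones; in general it's a multinomial coefficient, \emph{not} $n!$. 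So the correction in $\widetilde N$ is genuinely needed and accounts for the discrepancy in the $Z=\emptyset$ row of $ST$ (column sum $=n!$) versus what \autoref{T:Transitions} would give if naively extended. I will prove $ST=CP^t\widetilde N P$ by checking the $Z\ne\emptyset$ rows via \autoref{T:Transitions} and the $Z=\emptyset$ row via the column-sum-$=n!$ fact together with the symmetry $N_{A,B}=N_{B,A}$ and $N_{B,[1,\dots,1]}=n!/\prod b_i!$... actually the cleanest route: the $Z=\emptyset$ row of $ST$ has every entry $n!$, and the $Z=\emptyset$ row of $CP^t\widetilde N P$ sends $\emptyset\xmapsto{C}\A\xmapsto{P^t}[1,\dots,1]\xmapsto{\widetilde N}n!\text{ in every column}$, matching. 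Then left-multiply by $S^{-1}$ from part (i) to get $T=S^{-1}CP^t\widetilde N P$. For $\widetilde T=PTQ$: by \autoref{N:Reduction}(ii), $\widetilde T_{\beta,\alpha}=\sum_{Y\in\varphi^{-1}(\beta)}T_{Y,\psi(\alpha)}$, and $(PTQ)_{\beta,\alpha}=\sum_{Y}P_{\beta,Y}\sum_X T_{Y,X}Q_{X,\alpha}=\sum_{Y:\varphi(Y)=\beta}T_{Y,\psi(\alpha)}$, identical. Then substitute $T=S^{-1}CP^t\widetilde N P$ and use $PQ=I_{\Part}$ (since $\varphi(\psi(\alpha))=\alpha$) to collapse the trailing $PQ$, giving $\widetilde T=PS^{-1}CP^t\widetilde N$.

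For part (iii), I would argue that $v=\widetilde v P$ and $Q\widetilde u=u$, together with $PQ=I$, let me transfer powers. Indeed $\widetilde v P$ has $X$-entry $\widetilde v_{\varphi(X)}$, which is $0$ iff $\varphi(X)=[n]$ iff $X=\emptyset$ (using \autoref{L:SubsetVsSequence}: $\varphi(X)=[n]$ forces $\deg(\A\setminus X)=[n]$, i.e. $\A\setminus X$ generates all of $\Sym_n$, i.e. $\A\setminus X=\A$), matching $v$. And $Q\widetilde u$ has $X$-entry $1$ iff $X=\psi([1,\dots,1])$; since $[1,\dots,1]=\varphi(\A)$ is attained only by $X=\A$ (again \autoref{L:SubsetVsSequence}), we get $\psi([1,\dots,1])=\A$, matching $u$. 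Now I claim $PT^k=\widetilde T^k P$ for all $k\ge0$ by induction: the base case is $P=P$; for the step, $PT^{k+1}=(PT^k)T=\widetilde T^kPT$, and $PT=PS^{-1}CP^t\widetilde N P=\widetilde T P$ (reading off the formula from (ii) with the trailing $P$ matching the extra $P$ on the right of $PT$ — precisely, $\widetilde T P=(PS^{-1}CP^t\widetilde N)P=PS^{-1}CP^t\widetilde N P=P(S^{-1}CP^t\widetilde N P)=PT$). Hence $\widetilde T^{k+1}P=\widetilde T^k(\widetilde T P)=\widetilde T^k PT=PT^kT=PT^{k+1}$. Finally
\[
 vT^ku=(\widetilde v P)T^ku=\widetilde v(PT^k)u=\widetilde v\widetilde T^k(Pu)=\widetilde v\widetilde T^k(PQ\widetilde u)=\widetilde v\widetilde T^k\widetilde u,
\]
where $Pu=PQ\widetilde u=\widetilde u$. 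Since $vT^ku=a_k$ is the number of elements of $B_n^+$ of geodesic length $k$ by the general automaton formula recalled in the introduction, the growth function $\sum_k a_kt^k=\sum_k(\widetilde v\widetilde T^k\widetilde u)t^k$ is computed from the $p(n)\times p(n)$ matrix $\widetilde T$, as claimed.

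\textbf{Main obstacle.} The routine-looking but genuinely delicate point is the bookkeeping in part (ii): matching the row/column index conventions of the compound matrix $S^{-1}CP^t\widetilde N P$ against the combinatorial statement of \autoref{T:Transitions}, and in particular correctly handling the exceptional row (the $Z=\emptyset$ case of $ST$, equivalently the $\alpha=[1,\dots,1]$ row of $\widetilde N$) where the naive count $N_{\deg(\A),\deg(\A\setminus X)}$ must be overridden by the column sum $n!$. Everything else — part (i), the identities $PQ=I$, $v=\widetilde vP$, $Qu$... , and the inductive transfer of matrix powers in part (iii) — is bookkeeping of the same flavour but lower risk.
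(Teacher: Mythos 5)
Your proof is correct and, for parts (i) and (ii), follows essentially the same route as the paper: (i) is M\"obius inversion on the Boolean lattice, and (ii) is established exactly as in the paper by computing $(CP^t\widetilde{N}P)_{Z,X}$ entrywise and matching it against $\sum_{Y\supseteq Z}T_{Y,X}$, using \autoref{T:Transitions} together with \autoref{L:CntGraphs-Symm} for $Z\ne\emptyset$ and the column sum $n!$ for $Z=\emptyset$. (Your passing remark that the row $\alpha=[1,\ldots,1]$ of $\widetilde{N}$ ``corresponds to $Z=\A$'' is a slip --- it corresponds to $Z=\emptyset$, since the relevant composite sends $Z\mapsto\A\setminus Z\mapsto\varphi(\A\setminus Z)$, the partition of $\deg(Z)$ --- but your own final ``cleanest route'' computation gets this right, so nothing is broken.) The only genuine divergence is in part (iii): the paper collapses $(PTQ)^k$ to $PT^kQ$ via the identity $TQP=T$, which it deduces directly from \autoref{C:Transitions}, and then uses only $\widetilde{v}P=v$ and $Q\widetilde{u}=u$; you instead commute $P$ past powers of $T$ via the intertwining relation $PT=\widetilde{T}P$, read off from the factorisations $T=S^{-1}CP^t\widetilde{N}P$ and $\widetilde{T}=PS^{-1}CP^t\widetilde{N}$, together with the extra identity $Pu=\widetilde{u}$. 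Both identities encode the same underlying fact --- that the columns of $T$ depend only on the partition type $\varphi$ of the state --- so the two arguments are interchangeable; yours has the mild advantage of not re-invoking \autoref{C:Transitions} in part (iii), at the cost of leaning on the factorisation from (ii) rather than on the automaton directly.
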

\begin{proof}
\begin{enumerate}[label=\upshape{(\roman{*})},ref=\roman{*}]
\item
The claim is just a restatement of M\"obius inversion~\cite{Rota1964} for the poset $(\PP(\A),\supseteq)$.

\item
It is elementary to check that $\widetilde{T}=PTQ$, that $PQ=1\in\ZZ^{\Part\times\Part}$, and that
\[
 (CP^t\widetilde{N}P)_{Y,X} =
  \begin{cases}
    n!                              & \text{if $Y=\emptyset$} \\
    N_{\deg(Y),\deg(\A\setminus X)} & \text{otherwise}
  \end{cases} \;.
\]
By \autoref{T:Transitions} and part (i), the latter implies $T=S^{-1}CP^t\widetilde{N}P$ proving the claim.

\item
Since $\varphi^{-1}\big([n]\big)=\{\emptyset\}$ and $\varphi^{-1}\big([1,\ldots,1]\big)=\{\A\}$, one has $\widetilde{v}P=v$ respectively $Q\widetilde{u}=u$.
Since
\[
  (QP)_{X,Y} =
    \begin{cases}
      1 & \text{if $X=\psi(\varphi(Y))$} \\
      0 & \text{otherwise}
    \end{cases}
\]
and $T_{X,\psi(\varphi(Y))}=T_{X,Y}$ for all $X,Y\in\PP(\A)$ by \autoref{C:Transitions}, one has $TQP=T$.
Thus, for any $k\in\NN$,
\[
  \widetilde{v}\widetilde{T}^k\widetilde{u}
   \;=\; \widetilde{v}(PTQ)^k\widetilde{u}
   \;=\; (\widetilde{v}P)T^k(Q\widetilde{u})
   \;=\; vT^ku \;.
\]\vskip-4ex
\end{enumerate}
\end{proof}

\begin{myexample}\label{E:B4_reduction}
Consider again the braid monoid $B_4^+$; recall \autoref{E:B4} and \autoref{E:B4_partition}.
With $\Part = \big\{ [4],  [3,1],  [2,2],  [2,1,1],  [1,1,1,1] \}$, one has
\[
 N=\begin{pmatrix*}[r]
      0 &   0 &   0 &  0 &  1 \\
      0 &   0 &   0 &  1 &  4 \\
      0 &   0 &   1 &  2 &  6 \\
      0 &   1 &   2 &  5 & 12 \\
      1 & \;4 & \;6 & 12 & 24
   \end{pmatrix*}
\qquad
 \widetilde{N}=
   \begin{pmatrix*}[r]
      0 &  0 &  0 &  0 &  1 \\
      0 &  0 &  0 &  1 &  4 \\
      0 &  0 &  1 &  2 &  6 \\
      0 &  1 &  2 &  5 & 12 \\
     24 & 24 & 24 & 24 & 24
   \end{pmatrix*}
\]
\smallskip
\[
  P=\begin{pmatrix*}[r]
       1 & 0 & 0 & 0 & 0 & 0 & 0 & 0 \\
       0 & 1 & 0 & 0 & 1 & 0 & 0 & 0 \\
       0 & 0 & 1 & 0 & 0 & 0 & 0 & 0 \\
       0 & 0 & 0 & 1 & 0 & 1 & 1 & 0 \\
       0 & 0 & 0 & 0 & 0 & 0 & 0 & 1
    \end{pmatrix*}
\qquad
 Q=\begin{pmatrix*}[r]
      1 & 0 & 0 & 0 & 0 \\
      0 & 1 & 0 & 0 & 0 \\
      0 & 0 & 1 & 0 & 0 \\
      0 & 0 & 0 & 1 & 0 \\
      0 & 0 & 0 & 0 & 0 \\
      0 & 0 & 0 & 0 & 0 \\
      0 & 0 & 0 & 0 & 0 \\
      0 & 0 & 0 & 0 & 1
   \end{pmatrix*}
\]
\smallskip
\[
 C=\begin{pmatrix*}[r]
      0 & 0 & 0 & 0 & 0 & 0 & 0 & 1 \\
      0 & 0 & 0 & 0 & 0 & 0 & 1 & 0 \\
      0 & 0 & 0 & 0 & 0 & 1 & 0 & 0 \\
      0 & 0 & 0 & 0 & 1 & 0 & 0 & 0 \\
      0 & 0 & 0 & 1 & 0 & 0 & 0 & 0 \\
      0 & 0 & 1 & 0 & 0 & 0 & 0 & 0 \\
      0 & 1 & 0 & 0 & 0 & 0 & 0 & 0 \\
      1 & 0 & 0 & 0 & 0 & 0 & 0 & 0
   \end{pmatrix*}
\qquad
 S=\begin{pmatrix*}[r]
      1 & 1 & 1 & 1 & 1 & 1 & 1 & 1 \\
      0 & 1 & 0 & 1 & 0 & 1 & 0 & 1 \\
      0 & 0 & 1 & 1 & 0 & 0 & 1 & 1 \\
      0 & 0 & 0 & 1 & 0 & 0 & 0 & 1 \\
      0 & 0 & 0 & 0 & 1 & 1 & 1 & 1 \\
      0 & 0 & 0 & 0 & 0 & 1 & 0 & 1 \\
      0 & 0 & 0 & 0 & 0 & 0 & 1 & 1 \\
      0 & 0 & 0 & 0 & 0 & 0 & 0 & 1
   \end{pmatrix*}
\]
\smallskip
\[
 S^{-1}=\begin{pmatrix*}[r]
           1 & -1 & -1 &  1 & -1 &  1 &  1 & -1 \\
           0 &  1 &  0 & -1 &  0 & -1 &  0 &  1 \\
           0 &  0 &  1 & -1 &  0 &  0 & -1 &  1 \\
           0 &  0 &  0 &  1 &  0 &  0 &  0 & -1 \\
           0 &  0 &  0 &  0 &  1 & -1 & -1 &  1 \\
           0 &  0 &  0 &  0 &  0 &  1 &  0 & -1 \\
           0 &  0 &  0 &  0 &  0 &  0 &  1 & -1 \\
           0 &  0 &  0 &  0 &  0 &  0 &  0 &  1
        \end{pmatrix*}
\]
\smallskip
\[
 \widetilde{T}=
   \begin{pmatrix*}[r]
      24 & 21 & 19 & 13 &  1 \\
       0 &  2 &  2 &  4 &  6 \\
       0 &  1 &  2 &  3 &  5 \\
       0 &  0 &  1 &  4 & 11 \\
       0 &  0 &  0 &  0 &  1
   \end{pmatrix*}
\]
\end{myexample}

\medskip

Our objective is to compute $\widetilde{T}$ directly, without summing explicitly over the elements of $\PP(\A)$.
In the light of \autoref{P:Reduction} (\ref*{P:Reduction-transition}), the remaining problem is to compute $PS^{-1}CP^t\in\ZZ^{\Part\times\Part}$ efficiently, that is, to perform M\"obius inversion using classes of states rather than individual states.

\begin{notation}\label{N:Refinements}%
We say that $X\in\PP(\A)$ is of \emph{partition type} $\tau(X)=\alpha$, if the partition of $n$ given by $\deg(X)$ is $\alpha$.  (That is, $\tau(X)=\varphi(\A\setminus X)$.)

For $\alpha\in\Part$ let $N_{\alpha}$ be the number of subsets $X\in\PP(\A)$ of partition type $\alpha$.
Moreover, let $M\in\ZZ^{\Part\times\Part}$, where $M_{\alpha,\beta}$ is the number of subsets $Y\subseteq X$ of any fixed subset $X$ of partition type $\alpha$, such that $\tau(Y)=\beta$.
(It is obvious that $M_{\alpha,\beta}$ does not depend on the choice of $X$.)
\end{notation}

\begin{myexample}\label{E:Refinements}%
For $n=4$, consider $\alpha=[2,2]$ and $\beta=[2,1,1]$.  One has $M_{\alpha,\beta}=2$, since $X=\{\sigma_1,\sigma_3\}$ is of partition type $\alpha$, and $X$ has exactly two subsets of partition type $\beta$, namely $\{\sigma_1\}$ and $\{\sigma_3\}$.
\end{myexample}

\begin{proposition}\label{P:Refinements}%
Let $\{\!\!\{ a_1\!\!*\!m_1,\ldots,a_r\!\!*\!m_r \}\!\!\}$ denote a multiset with pairwise distinct elements $a_1,\ldots,a_r$ whose multiplicities are $\mu(a_i)=m_i$ for $i=1,\ldots,r$.%
\vspace*{-\itemsep}
\begin{enumerate}[label=\upshape{(\roman{*})},ref=\roman{*}]
\item\label{P:Refinements-N}
If $\alpha\in\Part$ and $\alpha=\{\!\!\{ a_1\!\!*\!m_1,\ldots,a_r\!\!*\!m_r \}\!\!\}$ as a multiset, then
\[
   N_{\alpha}=\frac{r!}{m_1!\cdots m_r!} \;.
\]

\item\label{P:Refinements-M}
For any $m\in\NN$, let $\Part_m$ denote the set of partitions of $m$ and define $\mathfrak{Q}_{m} = \{\!\!\{ \gamma\!*\!N_{\gamma} \mid \gamma\in\Part_{m} \}\!\!\}$ with $N_{\gamma}$ as in {\upshape(\ref*{P:Refinements-N})}.
Moreover, for $m_1,\ldots,m_s\in\NN$, let
$
  \kappa_{m_1,\ldots,m_s}:\; \mathfrak{Q}_{m_1}\times\cdots\times\mathfrak{Q}_{m_s}
                           \to \mathfrak{Q}_{m_1+\cdots+m_s}
$
be the map induced by the joining of partitions.

If $\alpha=[a_1,\ldots,a_r]\in\Part$, then $M_{\alpha,\beta}$ is the multiplicity of $\beta$ in the multiset
$\kappa_{a_1,\ldots,a_r}(\mathfrak{Q}_{a_1}\times\cdots\times\mathfrak{Q}_{a_r})$.

\item\label{P:Refinements-Cost}
The matrix $M$ can be computed in time $O\big(d^{\sqrt{n}}\big)$ with $d<2198$.
\end{enumerate}
\end{proposition}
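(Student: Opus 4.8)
The plan is to read off the cost bound directly from the structural descriptions in parts \ref{P:Refinements-N} and \ref{P:Refinements-M}, without re-deriving the combinatorics, and to reuse the asymptotic estimates that already appeared in the proof of \autoref{C:CntGraphs-Compl}. First I would observe that the matrix $M$ has $|\Part|^2 = p(n)^2$ entries, so by the Hardy--Ramanujan estimate $p(n)\sim\frac1{4n\sqrt3}e^{\pi\sqrt{2n/3}}$ the number of entries is subsumed in any bound of the form $O(c^{\sqrt n})$ with $c$ large enough; the work is therefore all in bounding the cost of producing one row, indexed by a partition $\alpha=[a_1,\ldots,a_r]$ of $n$.

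For a fixed $\alpha$, part \ref{P:Refinements-M} says that the row $M_{\alpha,-}$ is obtained as the image under $\kappa_{a_1,\ldots,a_r}$ of the product multiset $\mathfrak{Q}_{a_1}\times\cdots\times\mathfrak{Q}_{a_r}$. I would bound the number of tuples in this product by $\prod_{i=1}^r |\mathfrak{Q}_{a_i}| = \prod_{i=1}^r p(a_i)$. Since $p$ is log-superadditive in the sense that $\sum_i \pi\sqrt{2a_i/3}\le \pi\sqrt{2n/3}\cdot\sqrt r$ would be the wrong direction, the clean estimate is instead to use $\log p(m)\le \pi\sqrt{2m/3}$ and $\sum_i\sqrt{a_i}\le\sqrt{r\sum_i a_i}=\sqrt{rn}$; combined with the standard bound $r\le O(\sqrt n\log n)$ for the number of parts of a partition of $n$, or more crudely $r\le\sqrt{2n}$ when $\alpha$ has distinct parts and $r\le n$ in general, one gets $\prod_i p(a_i)\le \exp(O(\sqrt n\cdot\sqrt n)) $ in the worst case — which is too weak. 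The fix, and the step I expect to require the most care, is to note that computing the product incrementally, folding in one factor $\mathfrak{Q}_{a_i}$ at a time, never produces an intermediate multiset with more than $p(a_1+\cdots+a_i)\le p(n)$ distinct elements; so the cost of the $i$-th folding step is at most $p(n)\cdot p(a_i)\cdot(\text{arithmetic on multiplicities})$, and multiplicities are bounded exactly as in \autoref{C:CntGraphs-Compl} (each $M_{\alpha,\beta}$ counts subsets of an $(n-\text{something})$-element set, so $\log M_{\alpha,\beta}\le n$). Summing over the $r\le n$ folding steps and over all $\alpha$ then gives a bound of the shape $O\big(n^3\,p(n)^2\,e^{\pi\sqrt{2n/3}}\big)$, since $\max_m p(m)=p(n)$ and the single largest factor $\mathfrak{Q}_{a_i}$ contributes at most $p(n)$ elements.

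Finally I would collapse this to the stated form: using $p(n)\sim\frac1{4n\sqrt3}e^{\pi\sqrt{2n/3}}$ once more, $n^3 p(n)^2 e^{\pi\sqrt{2n/3}} = O\big(e^{3\pi\sqrt{2n/3}+o(\sqrt n)}\big)$, and $3\pi\sqrt{2/3}=\pi\sqrt{18/3}=\pi\sqrt6\approx 7.695$, so the base is $e^{\pi\sqrt6}\approx 2197.9<2198$; absorbing the polynomial and $o(\sqrt n)$ factors into a marginally larger base (still below $2198$, by the same slack argument used for $569$ in \autoref{C:CntGraphs-Compl}) yields $O(d^{\sqrt n})$ with $d<2198$. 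The main obstacle is purely bookkeeping: making sure the incremental-folding argument is phrased so that the cost is dominated by $p(n)^2$ rather than by the number of tuples in the unfolded product (which would be exponentially larger), and tracking the constant $\pi\sqrt6$ carefully enough to land strictly below $2198$.
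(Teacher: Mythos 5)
Your proposal for part (\ref{P:Refinements-Cost}) is correct and follows essentially the same route as the paper: an incremental folding of the factors $\mathfrak{Q}_{a_i}$ so that every intermediate multiset has at most $|\Part|$ distinct elements, multiplicities bounded by $2^n$, a total cost of the shape $O\big(n^{O(1)}|\Part|^3\big)$, and the Hardy--Ramanujan asymptotic giving the base $e^{\pi\sqrt{6}}<2198$. Parts (\ref*{P:Refinements-N}) and (\ref*{P:Refinements-M}), which you take as given, are likewise dismissed in the paper as immediate from \autoref{L:SubsetVsSequence}, so nothing essential is missing.
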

\begin{proof}
Claims (\ref*{P:Refinements-N}) and (\ref*{P:Refinements-M}) are obvious from \autoref{L:SubsetVsSequence}.
For claim (\ref*{P:Refinements-Cost}) consider $\alpha=[a_1,\ldots,a_r]$ and compute $M_{\alpha,\beta}$ for all $\beta\in\Part$ as follows.
Start with the multiset $\mathfrak{Q}=\{\!\!\{[\;]\}\!\!\}$, where $[\;]$ is the partition of 0; then for $i=1,\ldots,r$ compute $\mathfrak{Q}'=\kappa_{a,a_i}(\mathfrak{Q}\times\mathfrak{Q}_{a_i})$ (where $a=a_1+\cdots+a_{i-1}$) and replace $\mathfrak{Q}$ by $\mathfrak{Q}'$.
Computing $N_{\gamma}$ for all $\gamma\in\Part_{a_i}$ has a total cost of at most $O(|\Part|n^2(\ln n)^2)\subset O(|\Part|^2)$.
At any time, $\mathfrak{Q}$ and $\mathfrak{Q}_{a_i}$ each contain at most $|\Part|$ different elements, so there are at most $|\Part|^2$ joins to compute in each step, each at a cost of at most $O(n)$.
All multiplicities are bounded by $2^n$, so the product or the sum of two multiplicities can be computed at a cost of at most $O(n^2)$.
Thus, the cost of each step is at most $O\big(|\Part|^2(1+n+n^2)\big)=O\big(n^2|\Part|^2\big)$.
The number $r$ of steps is bounded by~$n$.
Since there are $|\Part|$ possibilities for~$\alpha$, the matrix $M$ can be computed in time $O\big(n^3|\Part|^3\big)$.
The claim then follows using the asymptotic expression~\cite{HardyRamanujan1918}
\[
p(n)\sim\frac1{4n\sqrt3} e^{\pi\sqrt{\frac{2n}{3}}} \;.
\]\vskip-3.5ex
\end{proof}

\begin{proposition}\label{P:Moebius}
For $\alpha=[a_1,\ldots,a_r]\in\Part$, let $\sgn(\alpha) = (-1)^{n-r}$.
Then
\[
   \big(PS^{-1}CP^t\big)_{\alpha,\beta} \;=\;
     \sgn(\alpha)\,\sgn(\beta)\sum_{\gamma\in\Part}
        \sgn(\gamma) N_{\gamma} M_{\gamma,\alpha} M_{\gamma,\beta}
\]
for $\alpha,\beta\in\Part$.
In particular, $\big(PS^{-1}CP^t\big)_{\alpha,\beta} = \big(PS^{-1}CP^t\big)_{\beta,\alpha}$.
\end{proposition}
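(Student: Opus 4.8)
The plan is to compute $\big(PS^{-1}CP^t\big)_{\alpha,\beta}$ explicitly by unwinding the definitions in \autoref{N:Reduction}, and then to reorganise the resulting alternating sum by passing to complements in $\A$ and applying inclusion--exclusion. Expanding the matrix product using the definitions of $P$ and $P^t$,
\[
 \big(PS^{-1}CP^t\big)_{\alpha,\beta} \;=\; \sum_{X\in\varphi^{-1}(\alpha)}\ \sum_{Z\in\varphi^{-1}(\beta)}\big(S^{-1}C\big)_{X,Z}\;,
\]
and since $C_{Y,Z}=1$ precisely for $Y=\A\setminus Z$, the formula for $S^{-1}$ from part (i) of \autoref{P:Reduction} gives
\[
 \big(S^{-1}C\big)_{X,Z} \;=\; \big(S^{-1}\big)_{X,\A\setminus Z} \;=\;
   \begin{cases} (-1)^{|\A\setminus(X\cup Z)|} & \text{if $X\cap Z=\emptyset$,}\\[0.5ex] 0 & \text{otherwise.}\end{cases}
\]

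Next I would pass to complements. Since $\varphi(W)$ is the partition given by $\deg(\A\setminus W)$ whereas $\tau(W)$ is the one given by $\deg(W)$, we have $\varphi(W)=\tau(\A\setminus W)$, so the substitution $X\mapsto\A\setminus X$, $Z\mapsto\A\setminus Z$ rewrites the double sum as a sum over pairs $(X,Z)$ with $\tau(X)=\alpha$, $\tau(Z)=\beta$ and $X\cup Z=\A$, with summand $(-1)^{|X\cap Z|}$. As a subset of $\A$ of partition type $\delta$ with $k$ parts has exactly $n-k$ elements, the quantity $|X\cap Z|=|X|+|Z|-(n-1)$ is determined by $\alpha$ and $\beta$ alone, and a short computation with $\sgn(\delta)=(-1)^{n-k}$ shows that the summand equals the constant $(-1)^{n+1}\sgn(\alpha)\sgn(\beta)$. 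Thus
\[
 \big(PS^{-1}CP^t\big)_{\alpha,\beta} \;=\; (-1)^{n+1}\sgn(\alpha)\sgn(\beta)\cdot\#\big\{(X,Z) : X\cup Z=\A,\ \tau(X)=\alpha,\ \tau(Z)=\beta\big\}\;,
\]
which is already manifestly symmetric in $\alpha$ and $\beta$, settling the final assertion.

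To evaluate the remaining count I would run inclusion--exclusion over $W=\A\setminus(X\cup Z)$: writing $g(W)$ for the number of pairs $(X,Z)$ with $X,Z\subseteq\A\setminus W$, $\tau(X)=\alpha$ and $\tau(Z)=\beta$, M\"obius inversion on the subset lattice gives the count as $\sum_{W\subseteq\A}(-1)^{|W|}g(W)$. Since $\A\setminus W$ is a fixed subset of partition type $\tau(\A\setminus W)$, the definition of $M$ in \autoref{N:Refinements} yields $g(W)=M_{\tau(\A\setminus W),\alpha}\cdot M_{\tau(\A\setminus W),\beta}$; grouping the sets $W$ according to $\gamma=\tau(\A\setminus W)$ --- there are $N_\gamma$ of them, each with $|W|=r_\gamma-1$, where $r_\gamma$ is the number of parts of $\gamma$ --- turns the count into $\sum_{\gamma\in\Part}N_\gamma(-1)^{r_\gamma-1}M_{\gamma,\alpha}M_{\gamma,\beta}$. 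Finally, $(-1)^{r_\gamma-1}=-(-1)^n\sgn(\gamma)$, and combining this with the prefactor $(-1)^{n+1}$ makes all powers of $-1$ collapse, leaving exactly $\sgn(\alpha)\sgn(\beta)\sum_{\gamma}\sgn(\gamma)N_\gamma M_{\gamma,\alpha}M_{\gamma,\beta}$.

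The only genuinely delicate part is the sign bookkeeping: one must check that the passage to complements, the identity $|X|=n-k$ for a type-$\delta$ subset, and the exponent $|W|=r_\gamma-1$ in the inclusion--exclusion step all fit together so that the various powers of $-1$ cancel. Everything else is a routine expansion of the matrices of \autoref{N:Reduction} and the combinatorial quantities of \autoref{N:Refinements}.
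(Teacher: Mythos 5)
Your proof is correct and follows essentially the same route as the paper's: expand $PS^{-1}CP^t$ using \autoref{N:Reduction}, pass to complements so that the entry becomes a constant sign times the number of pairs $(X,Z)$ with $X\cup Z=\A$, $\tau(X)=\alpha$, $\tau(Z)=\beta$, and evaluate that count by M\"obius inversion on the subset lattice, grouping the sets $W=\A\setminus(X\cup Z)$ by the partition type $\gamma=\tau(\A\setminus W)$. Your sign bookkeeping (the prefactor $(-1)^{n+1}$ from $|X\cap Z|=|X|+|Z|-(n-1)$ and the exponent $|W|=r_\gamma-1$) is accurate and checks out against the $B_4^+$ data in \autoref{E:B4_reduced}.
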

\begin{proof}
Recall that for $X\in\PP(\A)$, the orbits of $\langle \pi(X)\rangle$ correspond to the maximal irreducible standard parabolic subgroups of $\langle \pi(X)\rangle$.
More precisely, an orbit of size $s>1$ is generated by the image of a maximal set of consecutive atoms in $X$ that has size $s-1$.
Hence, if $\deg(X) = [a_1,\ldots,a_r]$, then $|X| = \sum_{i=1}^r (a_i-1) = n-r$.
In particular, $\sgn(\tau(X)) = (-1)^{|X|} = (-1)^{|\A|}(-1)^{|\A\setminus X|} = (-1)^{|\A|}\sgn(\varphi(X))$.

Let $\alpha,\beta\in\Part$ be fixed.
For $G\in\PP(\A)$ define
\[
  n_G = \Big|\big\{ (X,Y)\in\PP(\A)\times\PP(\A) \mid
             \tau(X)=\alpha,\; \tau(Y)=\beta, X\cup Y = G \big\}\Big|
\]
and
\[
  \widetilde{n}_G = \Big|\big\{ (X,Y)\in\PP(\A)\times\PP(\A) \mid
             \tau(X)=\alpha,\; \tau(Y)=\beta, X\cup Y\subseteq G \big\}\Big| \;.
\]
If $\tau(G)=\gamma$ then $\widetilde{n}_G = M_{\gamma,\alpha} M_{\gamma,\beta}$.
Moreover, by M\"obius inversion~\cite{Rota1964} for the poset $(\PP(\A),\subseteq)$, one has
\[
n_G = \sum_{\substack{G'\in\PP(\A) \\[0.5ex] G'\subseteq G}}
                  (-1)^{|G\setminus G'|}\; \widetilde{n}_{G'} \;.
\]
With \autoref{P:Reduction} (\ref*{P:Reduction-Moebius}) and the above, we obtain
\begin{align*}
\big(PS^{-1}CP^t\big)_{\alpha,\beta}
%  &=& \sum_{\substack{X,Y\in\PP(\A) \\[0.5ex]
%                      X\subseteq \A\setminus Y}}
%           (-1)^{|\A\setminus Y|-|X|} \; P_{\alpha,X} P_{\beta,Y} \\[2ex]
 &= \sum_{\substack{X\in\varphi^{-1}(\alpha) \\[0.5ex]
                       Y\in\varphi^{-1}(\beta) \\[0.5ex]
                       X\subseteq \A\setminus Y}}
          (-1)^{|\A\setminus Y|-|X|}
 \;\;= \sum_{\substack{X'\in\tau^{-1}(\alpha) \\[0.5ex]
                       Y'\in\tau^{-1}(\beta) \\[0.5ex]
                       X'\cup Y'=\A}}
          (-1)^{|Y'|-|\A\setminus X'|}
\end{align*}
\begin{align*}
%            \\[2ex]
%  &=&
%   (-1)^{|\A|}\sum_{\substack{X\in\varphi^{-1}(\alpha) \\[0.5ex]
%                           Y\in\varphi^{-1}(\beta) \\[0.5ex]
%                           X\subseteq \A\setminus Y}}
%           (-1)^{|\A\setminus Y|+|\A\setminus X|}
 &= (-1)^n\,\sgn(\alpha)\,\sgn(\beta)\,n_{\A}
\\[2ex]
 &= (-1)^n\,\sgn(\alpha)\,\sgn(\beta)\,\sum_{G'\in\PP(\A)}
                                (-1)^{|\A\setminus G'|}\; \widetilde{n}_{G'}
\\[2ex]
 &= \sgn(\alpha)\,\sgn(\beta)\,\sum_{\gamma\in\Part}
                     N_{\gamma}\,\sgn(\gamma)\; M_{\gamma,\alpha} M_{\gamma,\beta}
\end{align*}
as claimed.
\end{proof}

\begin{corollary}\label{C:Moebius}
The matrix $PS^{-1}CP^t$ can be computed in time $O\big(d^{\sqrt{n}}\big)$ with $d<2198$.
\end{corollary}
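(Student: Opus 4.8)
The plan is to read off every entry of $PS^{-1}CP^t$ from the closed formula of \autoref{P:Moebius} and to verify that the arithmetic this requires does not exceed the cost bound of \autoref{P:Refinements}~(\ref*{P:Refinements-Cost}). First I would collect the ingredients the formula needs. The signs $\sgn(\alpha)=(-1)^{n-r}$ for $\alpha=[a_1,\ldots,a_r]\in\Part$ depend only on the length $r$ of the partition and cost nothing. The numbers $N_\gamma$ for all $\gamma\in\Part$ are given explicitly by \autoref{P:Refinements}~(\ref*{P:Refinements-N}); each satisfies $N_\gamma\le r!\le n!$, so it has $O(n\ln n)$ bits, and the whole list can be produced in time polynomial in $n$ and $|\Part|$ --- this is exactly the subroutine already bounded by $O(|\Part|n^2(\ln n)^2)$ inside the proof of \autoref{P:Refinements}. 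Finally, the matrix $M$ is computed in time $O(d^{\sqrt n})$ with $d<2198$ by \autoref{P:Refinements}~(\ref*{P:Refinements-Cost}); since each $M_{\gamma,\alpha}$ counts certain subsets of a fixed set of cardinality at most $n-1$, one has $M_{\gamma,\alpha}\le 2^{n-1}$, so its entries have $O(n)$ bits as well.

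Next I would assemble the matrix entrywise. By \autoref{P:Moebius}, for each of the $|\Part|^2$ pairs $(\alpha,\beta)$ the entry $(PS^{-1}CP^t)_{\alpha,\beta}$ equals $\sgn(\alpha)\,\sgn(\beta)$ times a sum of at most $|\Part|$ terms $\sgn(\gamma)\,N_\gamma\,M_{\gamma,\alpha}\,M_{\gamma,\beta}$. Each such term is a product of three integers of $O(n\ln n)$ bits, hence the terms and the running partial sums also have $O(n\ln n)$ bits, and every multiplication and addition costs only a fixed power of $n$. So, once $M$ and the list $(N_\gamma)_{\gamma\in\Part}$ are available, the entire assembly costs $O\big(|\Part|^3\,n^{O(1)}\big)$.

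It remains to check that everything stays within $O(d^{\sqrt n})$ for a single $d<2198$, and this is the only point that needs some care. The computation of $M$ already runs in time $O(d^{\sqrt n})$ with $d<2198$. For the assembly, the Hardy--Ramanujan asymptotic $|\Part|=p(n)\sim\frac1{4n\sqrt3}e^{\pi\sqrt{2n/3}}$ of~\cite{HardyRamanujan1918} gives $|\Part|^3\sim(4\sqrt3)^{-3}n^{-3}\big(e^{\pi\sqrt6}\big)^{\sqrt n}$ with $e^{\pi\sqrt6}<2198$, while the surviving polynomial factor $n^{O(1)}$ is $O(a^{\sqrt n})$ for every $a>1$ --- exactly the absorption of polynomial factors used at the end of the proof of \autoref{C:CntGraphs-Compl}. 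Choosing $a$ close enough to $1$ that the exponential base stays below $2198$, the assembly too runs in time $O(d^{\sqrt n})$ with $d<2198$, and hence so does the whole computation. The main obstacle here is nothing deep: it is precisely this last piece of bookkeeping, i.e.\ confirming that $|\Part|^3$ (which already essentially saturates the bound of \autoref{P:Refinements}~(\ref*{P:Refinements-Cost})) together with the polynomial overhead from forming the $N_\gamma$ and the $|\Part|$-fold sums still leaves the exponential base under the stated constant.
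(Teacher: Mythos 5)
Your proposal is correct and follows the same route as the paper's (much terser) proof: apply the formula of \autoref{P:Moebius}, use \autoref{P:Refinements}~(\ref*{P:Refinements-Cost}) to compute $M$ within the stated bound, and observe that the remaining assembly costs $|\Part|^3$ times a polynomial in $n$ (from arithmetic on integers bounded by $2^n$ resp.\ $n!$), which the Hardy--Ramanujan asymptotic keeps below base $e^{\pi\sqrt6}<2198$. Your explicit check that the polynomial overhead can be absorbed while staying under $2198$ is exactly the bookkeeping the paper compresses into the remark that $n^2|\Part|^3\in O\big(d^{\sqrt n}\big)$.
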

\begin{proof}
The claim follows from \autoref{P:Moebius} and \autoref{P:Refinements} (\ref*{P:Refinements-Cost}), noting that all entries of $M$ are bounded by $2^n$, and that $n^2|\Part|^3\in O\big(d^{\sqrt{n}}\big)$.
\end{proof}

\begin{myexample}\label{E:B4_reduced}
Consider the braid monoid $B_4^+$; cf.\ \autoref{E:B4}, \autoref{E:B4_partition} and \autoref{E:B4_reduction}.
With $\Part = \big\{ [4],  [3,1],  [2,2],  [2,1,1],  [1,1,1,1] \}$, one has
\[
 M=\begin{pmatrix*}[r]
      1 & 2 & 1 & 3 & 1 \\
      0 & 1 & 0 & 2 & 1 \\
      0 & 0 & 1 & 2 & 1 \\
      0 & 0 & 0 & 1 & 1 \\
      0 & 0 & 0 & 0 & 1
   \end{pmatrix*}
\qquad
 PS^{-1}CP^t=
   \begin{pmatrix*}[r]
      -1 &  2 &  1 & -3 & \;\;1 \\
       2 & -2 & -2 &  2 &  0 \\
       1 & -2 &  0 &  1 &  0 \\
      -3 &  2 &  1 &  0 &  0 \\
       1 &  0 &  0 &  0 &  0
   \end{pmatrix*}
\]
\end{myexample}

%|<------------------------------------------------------------------------>|

\section{Feasibility in practice}\label{S:Feasibility}%
While its time complexity and its space complexity are still exponential in $n$, the new method presented here allows to compute the growth function of braid monoids for much larger values of $n$ than the methods from~\cite{Charney1993}.

\autoref{T:Timing} shows the time requirements for both methods.
Computations were done with a development version of \textsc{Magma} \cite{Magma} V2.18 on a GNU\,/\,Linux system with an Intel E8400 64-bit CPU (core:~3\,GHz, FSB:~1333\,MHz) and a main memory bandwidth of 4.7\,GB/s (X38 chipset, dual channel DDR2 RAM, memory bus: 1066\,MHz).
The maximum amount of memory used was 1370\,MB.
\begin{table}[ht]
\begin{center}
\begin{tabular}{@{\,}l|c@{\hspace{9pt}}c@{\hspace{9pt}}c@{\hspace{9pt}}c@{\hspace{9pt}}c@{\hspace{9pt}}c@{\hspace{9pt}}c@{\hspace{9pt}}c@{\hspace{9pt}}c@{\hspace{9pt}}c@{\hspace{1.5pt}}}
$n$ &
  7 &
  8 &
  9 &
 10 &
 15 &
 20 &
 25 &
 28 &
 29 &
 30
\\ \hline
$t${\footnotesize$_{\text{\cite{Charney1993}}}$}\rule[12pt]{0pt}{0pt} &
  0.6s &
  8.5s &
  145s &
 2820s &
 -- &
 -- &
 -- &
 -- &
 -- &
 --
\\[0.5ex]
$t${\tiny$_{\text{[G]}}$} &
 \textless\,0.1s &
 \textless\,0.1s &
 \textless\,0.1s &
 \textless\,0.1s &
 \textless\,0.1s &
  0.53s &
  17.1s &
  95.8s &
  191s &
  359s
\end{tabular}
\end{center}
\caption{Time $t${\footnotesize$_{\text{\cite{Charney1993}}}$} required for the method from \cite{Charney1993}, and time $t${\footnotesize$_{\text{[G]}}$} required for the new method.}\label{T:Timing}%
\end{table}

Computing the transition matrix of the more efficient of the automata from~\cite{Charney1993} (cf.~\autoref{S:FinishingSets}) involves considering $n!$ transitions for each of the $2^{n-1}$ states, so has time complexity $O(n!\,2^n)$; the space required is $O(4^n)$.
On our computer, the computation time would well exceed 10 hours for $n=11$; the transition matrix would exceed 4\,GB for $n=16$ and 2\,EB ($2\cdot10^{18}$\,B) for $n=30$.

The new method described in this paper, on the other hand, has time complexity $O\big(d^{\sqrt{n}}\big)\subset O(2^n)$ and requires space $O\big(|\Part|^2\big)=O\big(n^{-2}f^{\sqrt{n}}\big)\subset O(2^n)$ with $f<170$.
While the methods from~\cite{Charney1993} become infeasible if $n\gtrsim10$, the new method is usable at least up to $n\approx 30$ (cf.~\autoref{T:Timing}), removing a significant obstacle for the development of practical random generators.

%|<------------------------------------------------------------------------>|

\section*{Acknowledgements}%
The author wishes to thank Dr Stephen Tawn for porting an implementation of the described algorithm as a \Magma\ package to the \Magma\ C-kernel.  The timings reported in \autoref{S:Feasibility} were obtained using his implementation.

Further thanks go the anonymous referees for their thorough reviews of the article and helpful comments.

%|<------------------------------------------------------------------------>|

\bigskip

\noindent\textbf{Volker Gebhardt}\\
\noindent School of Computing, Engineering and Mathematics\\
University of Western Sydney\\
Locked Bag 1797, Penrith NSW 2751, Australia\\
\noindent E-mail: \texttt{v.gebhardt@uws.edu.au}

\end{document}